\newtheorem{theorem}{Theorem}[section]
\newtheorem{lemma}[theorem]{Lemma}
\theoremstyle{definition}
\newtheorem{proposition}[theorem]{Proposition}
\newtheorem{corollary}[theorem]{Corollary}
\theoremstyle{remark}
\newtheorem{remark}[theorem]{Remark}
\numberwithin{equation}{section}
\begin{document}
\title{A Gradient Descent Method for The Dubins Traveling Salesman Problem}
\author{D. Kirszenblat \and
	J. Ayala \and
	J. H. Rubinstein
}
              \address{School of Mathematics and Statistics, The University of Melbourne, Melbourne, Victoria 3010, Australia.}
              \email{d.kirszenblat@student.unimelb.edu.au}           
              \address{Universidad de Tarapac\'a, Instituto de Alta Investigaci\'on, Casilla 7D, Arica, Chile.}
              \address{School of Mathematics and Statistics, The University of Melbourne, Melbourne, Victoria 3010, Australia.}
              \email{jayalhoff@gmail.com}
              \address{School of Mathematics and Statistics, The University of Melbourne, Melbourne, Victoria 3010, Australia.}
              \email{joachim@unimelb.edu.au}
\date{}
\keywords{Path planning, Dubins path, Curvature constraint, Travelling salesman problem, Convex optimization}
\baselineskip=20 true pt
\maketitle \baselineskip=1\normalbaselineskip
\begin{abstract}
We propose a combination of a bounding procedure and gradient descent method for solving the Dubins traveling salesman problem, that is, the problem of finding a shortest curvature-constrained tour through a finite number of points in the euclidean plane. The problem finds applications in path planning for robotic vehicles and unmanned aerial vehicles, where a minimum turning radius prevents the vehicle from taking sharp turns. In this paper, we focus on the case where any two points are separated by at least four times the minimum turning radius, which is most interesting from a practical standpoint. The bounding procedure efficiently determines the optimal order in which to visit the points. The gradient descent method, which is inspired by a mechanical model, determines the optimal trajectories of the tour through the points in a given order, and its computation time scales linearly with the number of points. In experiments on nine points, the bounding procedure typically explores no more than a few sequences before finding the optimal sequence, and the gradient descent method typically converges to within 1\% of optimal in a single iteration.
\keywords{Path planning \and Dubins path \and Curvature constraint \and Travelling salesman problem \and Convex optimization}
\end{abstract}

\section{Introduction}
\label{intro}
The Dubins traveling salesman problem (DSTP) is to construct a shortest curvature-constrained tour through $n$ distinct points ${\bf x}_1, {\bf x}_2, \ldots, {\bf x}_n$ in the euclidean plane $\mathbb{R}^2$, where the optimal sequencing of the points is to be determined along with the trajectories of the tour through the points. The problem finds applications in path planning for robotic vehicles and unmanned aerial vehicles, where a minimum turning circle prevents the vehicle from taking sharp turns \cite{Berglund, Cohen, Otto, Tang}. In this paper, we will focus on the long path case, that is, the case where any two directed points are separated by at least four times the minimum turning radius $\rho > 0$. The long path case is both easier to address from a computational standpoint and most interesting from a practical standpoint.

To be clear, for $i = 1, 2, \ldots, n$, the tour is tangent to the unit vector $(\cos \theta_i, \sin \theta_i)$ based at the point ${\bf x}_i$, where the angle $\theta_i \in S^1$ is a variable to be determined. Taken together, the point ${\bf x}_i$ and angle $\theta_i$ are considered as a directed point. Due to a result of Dubins, it is known that any shortest curvature-constrained curve connecting two directed points $({\bf x}_1, \theta_1)$ and $({\bf x}_2, \theta_2)$ is a subtype of one of two types, namely, $CCC$ and $CSC$, where the symbol $C$ designates a circular arc of maximal curvature $\rho^{-1}$ and the symbol $S$ designates a straight segment \cite{Dubins}. In the long path case, a shortest curvature-constrained curve is of type $CSC$ for a generic choice of two directed points. 

The problem has a combinatorial aspect which concerns the order in which the tour visits the $n$ points in the euclidean plane. The problem also has a continuous aspect which, for a given sequence of points, concerns the directions of the unit tangent vectors. We note that in the limit, as the minimum turning radius $\rho$ tends toward zero, we recover the euclidean traveling salesman problem (ETSP). A common approach in the literature is to use the solution to the ETSP as a basis for constructing a curvature-constrained solution to the DTSP \cite{Macharet, Ny}. The subproblem of computing a shortest curvature-constrained path through a given sequence of points has been addressed using, for example, convex optimisation and optimal control theory \cite{Goaoc, Kaya, Lee, Ma, Manyam, Rathinam}.

Our contribution to solving the Dubins traveling salesman problem is twofold.
\begin{enumerate}
\item We propose a bounding procedure for efficiently exploring the space of sequences of points in order to identify an optimal sequence.
\item We propose a practical gradient descent method for determining the trajectories of the tour through a given sequence of points, where the computation time scales linearly with the number of points.
\end{enumerate}
The gradient descent method is inspired by a mechanical model. Analogous models have been used with success in the study of curvature-constrained networks and ribbon knot diagrams \cite{Kirszenblat2, Ayala}. The efficacy of our algorithm is supported by experimental results.

The paper is structured as follows. In Section \ref{sec:2}, we derive an expression for the gradient of the length of a tour through a given sequence of points. In Section \ref{sec:3}, we propose a seed solution for the gradient descent method. In Section \ref{sec:4}, we bound the optimality gap between the objective value of the incumbent solution and that of a locally optimal solution of the gradient descent method. In Section \ref{sec:5}, we bring together the results of the previous sections in the gradient descent method. In Section \ref{sec:6}, we propose a bounding procedure which uses the gradient descent method as a subroutine. In Section \ref{sec:7}, we present some experimental results. Finally, in Section \ref{sec:8}, we explain how to generalise the approach when the positions of the points may be varied.

\section{Computing the gradient of the length function}
\label{sec:2}
We open this section on computing the gradient of the length of a tour through a given sequence of points by presenting a mechanical model, which has been considered but not fully exploited by Goaoc et al. Where our results overlap with theirs, we provide alternate derivations, which are comparatively direct and intuitive in light of the mechanical model. To that end, consider an elastic band which runs through $n$ pairs of disks, as is illustrated in Figure \ref{fig:1} (left). The elastic band represents the tour for a fixed sequence of nodes, where each pair of disks is centred at a point representing a node of the tour. Each pair of disks serves to ensure that the elastic band satisfies the curvature constraint as it runs through the point between the two disks. Moreover, each pair of disks is free to rotate in the plane about the point between the two disks, as the direction of the tour through the corresponding node is a variable to be determined. Under the force of the tension in the elastic band, the elastic band settles into an equilibrium configuration, which minimises the potential energy of the system and, by implication, the length of the elastic band. Two properties of the mechanical model should be clear. First, each pair of disks pivots in response to a torque induced by the elastic band. Second, when the length of the elastic band is minimised, the two torques exerted upon a pair of disks -- one clockwise and the other counterclockwise -- are in balance.

\begin{figure}[h]
    \centering
    \includegraphics[width=\textwidth]{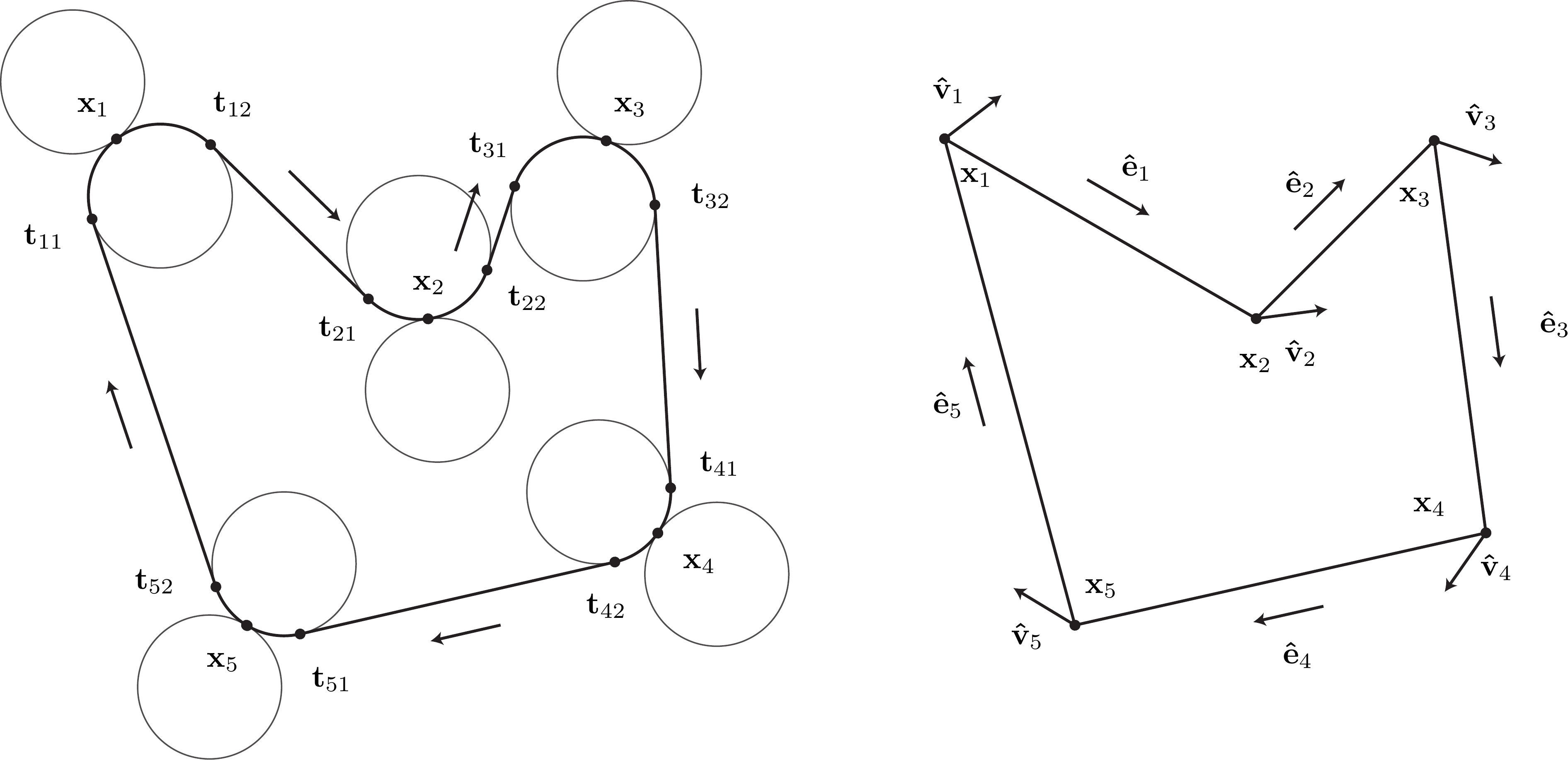}
    \caption{Left: A curvature-constrained tour may be modelled by a mechanical device comprising an elastic band which runs through a number of pairs of freely rotating disks. Right: In the limit, as the minimum turning radius tends toward zero, we recover the euclidean traveling salesman problem.}
    \label{fig:1}
\end{figure}

In order to derive an expression for the gradient, we borrow the following lemma with adapted notation from the study of curvature-constrained networks \cite[Proposition 1]{Kirszenblat2}. For ease of notation and without loss of generality, we scale the problem, so that $\rho = 1$.

\begin{lemma}
\label{lem:1}
Let ${\bf p}_1 {\bf p}_2 {\bf p}_3$ denote a Dubins curve of type $CS$, and let ${\bf \hat u}$ denote the normalisation of the vector ${\bf p}_3 {\bf p}_2$. If the point ${\bf p}_3$ is varied with velocity ${\bf v}$, then the first variation of the length of the curve is given by

\begin{equation*}
-{\bf v} \cdot {\bf \hat u}.
\end{equation*}
\end{lemma}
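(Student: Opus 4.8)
The plan is to compute the first variation directly from the geometry of a $CS$ Dubins curve, exploiting the fact that only the straight segment $S$ contributes to length change to first order. Write the curve as a circular arc $C$ from ${\bf p}_1$ to ${\bf p}_2$ followed by the straight segment $S$ from ${\bf p}_2$ to ${\bf p}_3$. The key structural observation is that, since the initial directed point $({\bf p}_1,\theta_1)$ is held fixed and the curvature bound forces the arc $C$ to have fixed radius $\rho = 1$, the arc $C$ lies on a fixed circle (the one tangent to the initial direction at ${\bf p}_1$). Hence the tangent point ${\bf p}_2$ is constrained to move along this fixed circle, and the segment ${\bf p}_2 {\bf p}_3$ must remain tangent to the circle at ${\bf p}_2$ — equivalently, ${\bf p}_2$ is the point of tangency of the line through ${\bf p}_3$ to the fixed circle on the appropriate side.

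First I would parametrise ${\bf p}_2 = {\bf p}_2(t)$ along the fixed circle and write the total length as $L(t) = \operatorname{arc}({\bf p}_1,{\bf p}_2(t)) + \lVert {\bf p}_3(t) - {\bf p}_2(t)\rVert$, with ${\bf p}_3(0)$ moving with velocity ${\bf v}$. Differentiating the straight part gives a term $\dot{\bf p}_3 \cdot \hat{\bf w} - \dot{\bf p}_2 \cdot \hat{\bf w}$, where $\hat{\bf w}$ is the unit vector from ${\bf p}_2$ to ${\bf p}_3$; note $\hat{\bf w} = -{\bf \hat u}$ since ${\bf \hat u}$ is the normalisation of ${\bf p}_3{\bf p}_2$. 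Differentiating the arc length gives $\dot{\bf p}_2 \cdot {\bf \hat t}_2$, where ${\bf \hat t}_2$ is the unit tangent to the circle at ${\bf p}_2$ in the direction of travel. The crucial cancellation is that ${\bf \hat t}_2 = \hat{\bf w}$: the segment $S$ leaves the circle tangentially at ${\bf p}_2$, so the direction of the straight segment agrees with the circle's tangent there. Therefore the two terms involving $\dot{\bf p}_2$ cancel, and we are left with $\dot{\bf p}_3 \cdot \hat{\bf w} = {\bf v}\cdot(-{\bf \hat u}) = -{\bf v}\cdot{\bf \hat u}$, as claimed.

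The main obstacle — and the step deserving the most care — is justifying rigorously that the $\dot{\bf p}_2$ contributions cancel rather than hand-waving it through the "elastic band" picture. This amounts to showing that ${\bf p}_2(t)$ is differentiable in $t$ (true generically, away from degenerate configurations where the segment length vanishes or the tangent line construction degenerates), and that the tangency condition ${\bf \hat t}_2 = \hat{\bf w}$ holds exactly, not just approximately. One clean way to see the cancellation without computing $\dot{\bf p}_2$ at all is a first-order optimality / envelope argument: among all admissible $CS$ curves from the fixed $({\bf p}_1,\theta_1)$ to the fixed target ${\bf p}_3(0)$, the actual curve is the unique one (the switch point ${\bf p}_2$ is determined), so $L$ is stationary with respect to variations of ${\bf p}_2$ holding endpoints fixed; hence only the explicit ${\bf p}_3$-dependence survives, immediately yielding $\partial L/\partial {\bf p}_3 \cdot {\bf v} = -{\bf v}\cdot{\bf \hat u}$. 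I would present the envelope argument as the main line and mention the direct parametrisation as the concrete verification.
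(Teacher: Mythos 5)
Your proposal is correct and is essentially the paper's own argument: both decompose the length into arc plus segment, differentiate, and use the tangency of the segment to the circle at ${\bf p}_2$ to cancel the $\dot{\bf p}_2$ contributions, leaving only $-{\bf v}\cdot{\bf \hat u}$. Your envelope/stationarity framing is just a repackaging of the same cancellation (the paper writes it explicitly as $\phi' + {\bf p}_2'\cdot{\bf \hat u} = 0$), so there is no substantive difference in the two routes.
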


\begin{proof}
Refer to Figure \ref{fig:2} (left). Let $\phi$ denote the length of the arc ${\bf p}_1 {\bf p}_2$. When the point ${\bf p}_3$ is varied, the first variation of the length of the arc ${\bf p}_1 {\bf p}_2$ is given by $\phi'$. On the other hand, the first variation of the length of the straight segment ${\bf p}_3 {\bf p}_2$ is given by ${\bf u}' \cdot {\bf \hat u}$. Summing these two terms, we obtain for the first variation of the length of the curve ${\bf p}_1 {\bf p}_2 {\bf p}_3$ the expression

\begin{align*}
\phi' + {\bf u}' \cdot {\bf \hat u} &= \phi' + ({\bf p}_2 - {\bf p}_3)' \cdot {\bf \hat u}\\
&= \phi' + ({\bf p}_2' - {\bf p}_3') \cdot {\bf \hat u}\\
&= \phi' + {\bf p}_2' \cdot {\bf \hat u} - {\bf p}_3' \cdot {\bf \hat u}\\
&= \phi' - \phi' - {\bf v} \cdot {\bf \hat u}\\
&= -{\bf v} \cdot {\bf \hat u}.
\end{align*}
\end{proof}

\begin{figure}[h]
    \centering
    \includegraphics[width=\textwidth]{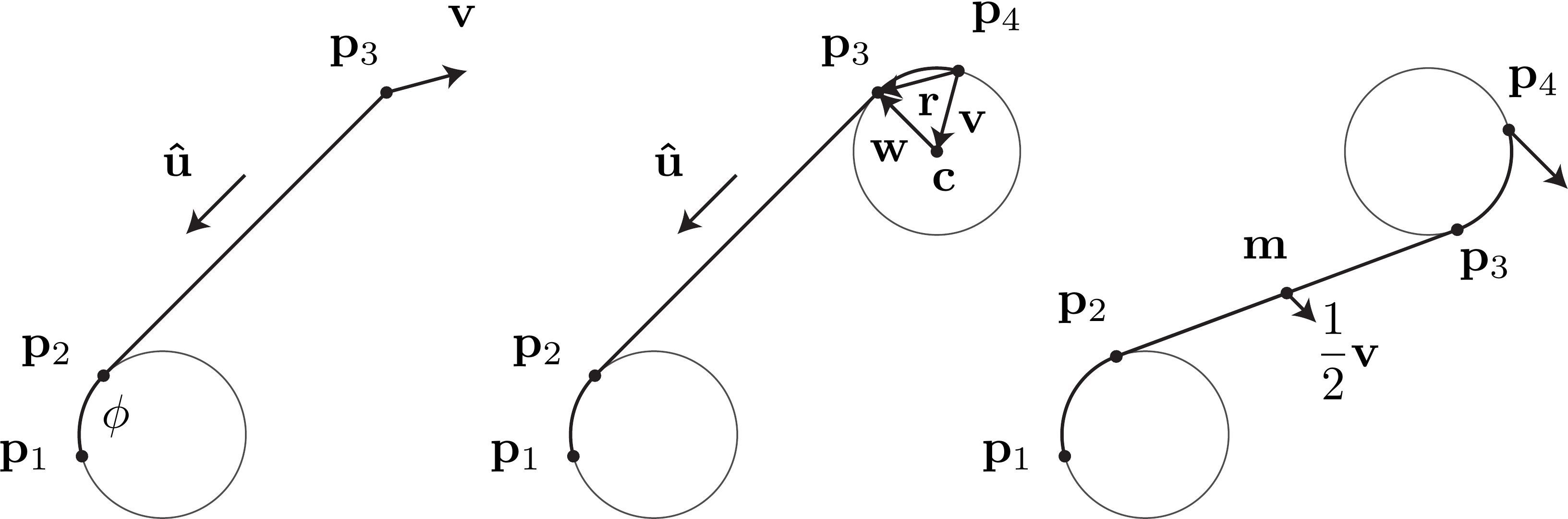}
    \caption{Left: The point ${\bf p}_3$ is perturbed in the direction of a vector ${\bf v}$. Middle: The disk with centre ${\bf c}$ is rotated about the point ${\bf p}_4$. Right: The point ${\bf p}_4$ is perturbed in the direction of a vector ${\bf v}$.}
    \label{fig:2}
\end{figure}

\begin{remark}
The unit vector ${\bf \hat u}$ may be interpreted as the force exerted upon the point ${\bf p}_3$ by the elastic representing the Dubins curve.
\end{remark}

The following theorem was first presented in the unpublished honours thesis of the first author \cite[Proposition 5.2]{Kirszenblat1}.

\begin{theorem}
\label{the:1}
Let ${\bf p}_1 {\bf p}_2 {\bf p}_3 {\bf p}_4$ denote a Dubins curve of type $CSC$. Let ${\bf r}$ denote the vector ${\bf p}_4 {\bf p}_3$ and ${\bf \hat u}$ the normalisation of the vector ${\bf p}_3 {\bf p}_2$. Finally, let ${\bf \hat z}$ denote the unit vector which points out of the plane of the page. If the angle associated with the point ${\bf p}_4$ is varied with angular velocity $\omega$, then the first variation of the length of the curve is given by

\begin{equation*}
-\omega({\bf r} \times {\bf \hat u}) \cdot {\bf \hat z}.
\end{equation*}
\end{theorem}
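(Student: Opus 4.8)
The plan is to reduce Theorem \ref{the:1} to Lemma \ref{lem:1} by recognizing that rotating the disk at ${\bf p}_4$ is, infinitesimally, the same as translating the endpoint ${\bf p}_3$ of the $CS$ portion ${\bf p}_1 {\bf p}_2 {\bf p}_3$. The curve ${\bf p}_1 {\bf p}_2 {\bf p}_3 {\bf p}_4$ of type $CSC$ consists of a $CS$ piece ending at ${\bf p}_3$ followed by a final arc $C$ from ${\bf p}_3$ to ${\bf p}_4$. When only the angle at ${\bf p}_4$ is varied (the point ${\bf p}_4$ itself held fixed), the final arc rotates rigidly about ${\bf p}_4$; its length does not change, but its free endpoint ${\bf p}_3$ moves. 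So the entire first variation of the length of the $CSC$ curve equals the first variation of the length of the $CS$ curve ${\bf p}_1 {\bf p}_2 {\bf p}_3$ as its endpoint ${\bf p}_3$ is dragged along with the rotating arc. Lemma \ref{lem:1} then gives this variation as $-{\bf v} \cdot {\bf \hat u}$, where ${\bf v}$ is the velocity of ${\bf p}_3$.

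The key computational step is therefore to express ${\bf v}$, the velocity of ${\bf p}_3$ under a rotation about ${\bf p}_4$ with angular velocity $\omega$. Since ${\bf p}_3$ is carried rigidly by a rotation about ${\bf p}_4$, and the vector from the center of rotation to ${\bf p}_3$ is ${\bf p}_4 {\bf p}_3 = {\bf r}$, the standard formula for the velocity of a point in rigid rotation gives ${\bf v} = \omega\,({\bf \hat z} \times {\bf r})$. Substituting into $-{\bf v} \cdot {\bf \hat u}$ and using the cyclic invariance of the scalar triple product,
\begin{equation*}
-{\bf v} \cdot {\bf \hat u} = -\omega\,({\bf \hat z} \times {\bf r}) \cdot {\bf \hat u} = -\omega\,({\bf r} \times {\bf \hat u}) \cdot {\bf \hat z},
\end{equation*}
which is exactly the claimed expression. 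Figure \ref{fig:2} (middle) depicts precisely this configuration, so I would refer to it to fix the sign convention and the orientation of ${\bf \hat z}$.

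The main obstacle, such as it is, is bookkeeping rather than anything deep: one must be careful that Lemma \ref{lem:1} is being applied to the sub-curve ${\bf p}_1 {\bf p}_2 {\bf p}_3$ with its terminal point ${\bf p}_3$ playing the role of "${\bf p}_3$" in the lemma, and that the $CS$ structure is preserved — i.e., the rotation of the last arc about ${\bf p}_4$ does not alter the first arc ${\bf p}_1 {\bf p}_2$ or the straight segment ${\bf p}_2 {\bf p}_3$ as a rigid object, it only relocates where ${\bf p}_3$ sits. One should also note that the decomposition point ${\bf p}_3$ remains the tangent point between the straight segment and the final arc throughout the variation (this is forced by the $CSC$ structure and the fixed radius), so that ${\bf \hat u}$ is well defined and continues to point along ${\bf p}_3 {\bf p}_2$. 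Once these points are in place, the only genuine calculation is the one-line rigid-rotation velocity formula together with the triple-product identity, and the proof closes.
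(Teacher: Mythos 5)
Your strategy---reduce to Lemma \ref{lem:1} by tracking the motion of ${\bf p}_3$, then use the rigid--rotation velocity formula ${\bf v}=\omega({\bf \hat z}\times{\bf r})$ and the cyclic property of the triple product---is sound, and your final expression is correct. But the step ``the final arc rotates rigidly about ${\bf p}_4$; its length does not change, but its free endpoint ${\bf p}_3$ moves'' is not literally true of the family of $CSC$ curves, and this is a genuine gap. The terminal circle does move rigidly, but ${\bf p}_3$ is the point of tangency between the straight segment and that circle: it does not ride rigidly with the circle, it slides along it so as to remain the tangency point (your own closing remark that ${\bf p}_3$ ``remains the tangent point throughout the variation'' contradicts the rigid-motion picture). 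Consequently the terminal arc of the perturbed $CSC$ curve is a \emph{different} arc of the rotated circle, and its length does change to first order. Dragging ${\bf p}_3$ rigidly produces a comparison curve with a corner at ${\bf p}_3$, not the new $CSC$ curve, so Lemma \ref{lem:1} is being applied to the wrong velocity.

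Your computation nevertheless gives the right answer because of a cancellation you need to state. Write the true velocity of the tangency point as ${\bf V}_3=\omega({\bf \hat z}\times{\bf r})+\sigma{\bf \hat u}$, where $\sigma{\bf \hat u}$ is the sliding along the circle (necessarily parallel to ${\bf \hat u}$, since the segment is tangent to the circle at ${\bf p}_3$). By Lemma \ref{lem:1} the $CS$ part changes at rate $-{\bf V}_3\cdot{\bf \hat u}=-\omega({\bf \hat z}\times{\bf r})\cdot{\bf \hat u}-\sigma$, while the arc length changes at rate $+\sigma$ (the rigid motion preserves it, and unit-speed sliding of the ${\bf p}_3$-end along the circle lengthens or shortens it at unit rate); the $\sigma$ terms cancel, leaving $-\omega({\bf r}\times{\bf \hat u})\cdot{\bf \hat z}$. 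This is the same tangential cancellation that kills the $\phi'$ terms at the tangency point ${\bf p}_2$ inside the proof of Lemma \ref{lem:1}. For comparison, the paper decomposes the rotation about ${\bf p}_4$ into a rotation about the centre ${\bf c}$ (which fixes the circle setwise, leaves the $CS$ part untouched, and changes the arc length by exactly $-\omega$) plus a translation handled by Lemma \ref{lem:1}, and then combines the two contributions via ${\bf v}+{\bf w}={\bf r}$. Your decomposition is more direct, but it is only valid once the sliding term is exhibited and cancelled.
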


\begin{proof}
Refer to Figure \ref{fig:2} (middle). The arc ${\bf p}_3 {\bf p}_4$ belongs to a circle with centre ${\bf c}$. Let ${\bf v}$ and ${\bf w}$ denote the vectors ${\bf p}_4 {\bf c}$ and ${\bf c} {\bf p}_3$, respectively. Note that a rotation of the circle with centre ${\bf c}$ about the point ${\bf p}_4$ by an angle of $t \omega$ is the same as the composition of a rotation about the point ${\bf c}$ by an angle of $t \omega$ and a translation by ${\bf v}_t - {\bf v}_0$, where ${\bf v}_0$ is the vector ${\bf v}$ at time 0 and ${\bf v}_t$ is the vector ${\bf v}$ at time $t$. By Lemma \ref{lem:1}, the first variation of the length of the curve ${\bf p}_1 {\bf p}_2 {\bf p}_3$ is given by

\begin{align*}
-({\bf v}_t - {\bf v}_0)' \cdot {\bf \hat u} &= -{\bf v}_t' \cdot {\bf \hat u}\\
&= -\omega ({\bf \hat z} \times {\bf v}) \cdot {\bf \hat u},
\end{align*}

where we have dropped the dependence on $t$. On the other hand, the first variation of the length of the arc ${\bf p}_3 {\bf p}_4$ is given by

\begin{equation*}
-\omega = -\omega ({\bf \hat z} \times {\bf w}) \cdot {\bf \hat u}.
\end{equation*}

Summing these two terms, we obtain for the first variation of the length of the curve ${\bf p}_1 {\bf p}_2 {\bf p}_3 {\bf p}_4$ the expression

\begin{align*}
-\omega ({\bf \hat z} \times {\bf v}) \cdot {\bf \hat u} - \omega ({\bf \hat z} \times {\bf w}) \cdot {\bf \hat u} &= -\omega ({\bf \hat z} \times ({\bf v} + {\bf w})) \cdot {\bf \hat u}\\
&= -\omega ({\bf \hat z} \times {\bf r}) \cdot {\bf \hat u}\\
&= -\omega ({\bf r} \times {\bf \hat u}) \cdot {\bf \hat z}.
\end{align*}

\end{proof}

\begin{remark}
The expression ${\bf r} \times {\bf \hat u}$ may be interpreted as the torque exerted upon the disk with centre ${\bf c}$ by the elastic representing the Dubins curve.
\end{remark}

We can now provide an expression for the partial derivatives of the length of a tour through a given sequence of points as the angles $\theta_1, \theta_2, \ldots, \theta_n$ are varied. Consider the map $L: (S^1)^n \to \mathbb{R}^+$ which assigns to an $n$-tuple of angles the length of the corresponding curvature-constrained tour through a given sequence of points ${\bf x}_1, {\bf x}_2, \ldots, {\bf x}_n$.

\begin{theorem}
\label{the:2}
The $i$th partial derivative of the length of the tour may be expressed as

\begin{equation*}
\frac{\partial L}{\partial \theta_i} = -\tau_i,
\end{equation*}

where $\tau_i$ may be interpreted as the signed magnitude of the total torque exerted upon the $i$th pair of disks by the elastic band representing the Dubins curve.
\end{theorem}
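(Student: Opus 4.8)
The plan is to localize the effect of varying a single angle $\theta_i$ to the two Dubins arcs incident to the $i$th point, and then apply Theorem~\ref{the:1} to each of them. Fix $i$ and hold all angles $\theta_j$ with $j \neq i$ constant. In the long path case, the tour restricted to a neighbourhood of the $i$th directed point consists of two $CSC$ Dubins curves: the one arriving at $({\bf x}_i, \theta_i)$ from $({\bf x}_{i-1}, \theta_{i-1})$, and the one departing from $({\bf x}_i, \theta_i)$ towards $({\bf x}_{i+1}, \theta_{i+1})$. Varying $\theta_i$ with angular velocity $\omega$ rotates the $i$th pair of disks about ${\bf x}_i$, and this is precisely the kind of perturbation governed by Theorem~\ref{the:1}: the final disk of the incoming $CSC$ curve is rotated about its pivot ${\bf x}_i$, and so is the initial disk of the outgoing $CSC$ curve.

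First I would apply Theorem~\ref{the:1} directly to the incoming curve ${\bf p}_1 {\bf p}_2 {\bf p}_3 {\bf p}_4$ with ${\bf p}_4 = {\bf x}_i$, obtaining a first variation of $-\omega ({\bf r}_{\text{in}} \times {\bf \hat u}_{\text{in}}) \cdot {\bf \hat z}$, where ${\bf r}_{\text{in}}$ and ${\bf \hat u}_{\text{in}}$ are the relevant vectors at the $i$th point coming from the $(i-1)$th point. Next I would apply the same theorem to the outgoing curve; here the curve is traversed in the opposite direction, so after reversing orientation (so that ${\bf x}_i$ plays the role of the final endpoint ${\bf p}_4$ and the motion of its disk is described by the same $\omega$) one gets a contribution $-\omega ({\bf r}_{\text{out}} \times {\bf \hat u}_{\text{out}}) \cdot {\bf \hat z}$. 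Summing, the total first variation of $L$ under the perturbation is
\begin{equation*}
\frac{dL}{dt} = -\omega\bigl( ({\bf r}_{\text{in}} \times {\bf \hat u}_{\text{in}}) + ({\bf r}_{\text{out}} \times {\bf \hat u}_{\text{out}}) \bigr) \cdot {\bf \hat z}.
\end{equation*}
Setting $\omega = \dot\theta_i$ and reading off the coefficient gives $\partial L / \partial \theta_i = -\tau_i$, where $\tau_i := \bigl( ({\bf r}_{\text{in}} \times {\bf \hat u}_{\text{in}}) + ({\bf r}_{\text{out}} \times {\bf \hat u}_{\text{out}}) \bigr) \cdot {\bf \hat z}$ is exactly the $z$-component of the sum of the two torques exerted on the $i$th pair of disks — one from each incident elastic segment — which is the asserted mechanical interpretation.

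The main obstacle is bookkeeping rather than analysis: I must check that varying $\theta_i$ genuinely leaves the other angles, and hence all other Dubins curves in the tour, unchanged, so that $\partial L/\partial\theta_i$ is simply the sum of the two local contributions; and I must handle the orientation reversal for the outgoing curve carefully, making sure the sign of $\omega$ and the roles of ${\bf r}$ and ${\bf \hat u}$ are assigned consistently with the statement of Theorem~\ref{the:1}. One should also note that this is exactly the setting in which Theorem~\ref{the:1} applies: in the long path case each incident curve is of type $CSC$, so the theorem is available for both the incoming and the outgoing segment, and the degenerate configurations where an arc vanishes form a measure-zero set that does not affect the generic gradient formula.
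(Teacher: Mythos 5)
Your proposal is correct and follows essentially the same route as the paper: the paper likewise observes that varying $\theta_i$ affects only the two $CSC$ curves incident to ${\bf x}_i$, applies Theorem~\ref{the:1} to each (once with the orientation reversed so that ${\bf x}_i$ plays the role of ${\bf p}_4$), and sums the two resulting torque terms, yielding $\tau_i = ({\bf r}_{i2} \times {\bf \hat u}_i - {\bf r}_{i1} \times {\bf \hat u}_{i-1}) \cdot {\bf \hat z}$. Your sign bookkeeping for the reversed outgoing curve is consistent with this once ${\bf \hat u}_{\mathrm{in}}$, ${\bf \hat u}_{\mathrm{out}}$ are unwound into the tour-oriented vectors, so there is no gap.
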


\begin{proof} Referring to Figure \ref{fig:1}, for $i = 1, 2, \ldots, n$, let ${\bf \hat u}_i$ denote the unit vector which points along the $i$th straight segment and has the same orientation as the tour. Denote its initial and final points by ${\bf t}_{i2}$ and ${\bf t}_{\mbox{mod}(i + 1, n)1}$, respectively. Define the vectors ${\bf r}_{i1}$ and ${\bf r}_{i2}$ as follows:

\begin{align*}
{\bf r}_{i1} &= {\bf t}_{i1} - {\bf x}_i,\\
{\bf r}_{i2} &= {\bf t}_{i2} - {\bf x}_i.
\end{align*}

Following Theorem \ref{the:1}, the negative of the first variation of the length of the tour as the $i$th angle is varied is given by

\begin{equation*}
\tau_i = ({\bf r}_{i2} \times {\bf \hat u}_i - {\bf r}_{i1} \times {\bf \hat u}_{\mbox{mod}(i, n, 1)}) \cdot {\bf \hat z},
\end{equation*}

where $\mbox{mod}(i, n, 1) = i - n \lfloor\frac{i - 1}{n}\rfloor$.
\end{proof}

\begin{corollary}
When the length of the tour through a fixed sequence of nodes is minimised, for $i = 1, 2, \ldots, n$, we have

\begin{equation*}
{\bf r}_{i2} \times {\bf \hat u}_i = {\bf r}_{i1} \times {\bf \hat u}_{\mbox{mod}(i, n, 1)}.
\end{equation*}

In other words, the torques exerted by the elastic band on the $i$th disk are in balance.
\end{corollary}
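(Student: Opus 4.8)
The plan is to read the corollary as the first-order optimality condition attached to Theorem \ref{the:2}. First I would note that the domain $(S^1)^n$ is a compact manifold without boundary, so the continuous length function $L$ attains a minimum at some $n$-tuple of angles $(\theta_1, \theta_2, \ldots, \theta_n)$, and at such a minimiser every partial derivative $\partial L/\partial\theta_i$ must vanish, since each $\theta_i$ ranges over a full circle and there is no boundary at which a constrained minimum could sit. In the long path case the Dubins path joining each pair of consecutive directed points is of type $CSC$, and its length depends smoothly on the endpoint angles, so $L$ is differentiable in a neighbourhood of the minimiser and the stationarity equations $\partial L/\partial\theta_i = 0$ are legitimate.

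Next I would invoke Theorem \ref{the:2}, which gives $\partial L/\partial\theta_i = -\tau_i$ with
\[
\tau_i = ({\bf r}_{i2} \times {\bf \hat u}_i - {\bf r}_{i1} \times {\bf \hat u}_{\mbox{mod}(i, n, 1)}) \cdot {\bf \hat z}.
\]
Stationarity forces $\tau_i = 0$, that is, the ${\bf \hat z}$-component of ${\bf r}_{i2} \times {\bf \hat u}_i - {\bf r}_{i1} \times {\bf \hat u}_{\mbox{mod}(i, n, 1)}$ is zero. Finally, since the vectors ${\bf r}_{i1}$, ${\bf r}_{i2}$, ${\bf \hat u}_i$, ${\bf \hat u}_{\mbox{mod}(i, n, 1)}$ all lie in the plane of the page, each cross product ${\bf r}_{i2} \times {\bf \hat u}_i$ and ${\bf r}_{i1} \times {\bf \hat u}_{\mbox{mod}(i, n, 1)}$ is itself a scalar multiple of ${\bf \hat z}$; hence the vanishing of the ${\bf \hat z}$-component of their difference upgrades to the full vector identity
\[
{\bf r}_{i2} \times {\bf \hat u}_i = {\bf r}_{i1} \times {\bf \hat u}_{\mbox{mod}(i, n, 1)},
\]
which is the claimed torque balance on the $i$th disk, and this holds simultaneously for $i = 1, 2, \ldots, n$.

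I do not anticipate a genuine obstacle here, as the argument is essentially a restatement of Theorem \ref{the:2} at a critical point. The only step requiring care is the justification that $L$ is differentiable at the minimiser; this is precisely where the long path hypothesis — any two directed points separated by at least $4\rho$, which forces the $CSC$ connectors and rules out degenerate $CCC$ segments — enters, in conjunction with the compactness argument that guarantees a minimiser exists in the first place.
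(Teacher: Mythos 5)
Your proposal is correct and matches the paper's intent exactly: the corollary is stated as an immediate consequence of Theorem \ref{the:2} (the paper supplies no separate proof), obtained by setting $\partial L/\partial\theta_i = -\tau_i = 0$ at an interior minimiser on $(S^1)^n$. Your additional remarks on compactness, smoothness in the long path case, and the planarity of the vectors upgrading the scalar identity to the vector identity are all sound and simply make explicit what the paper leaves implicit.
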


Writing ${\boldsymbol \theta} = (\theta_1, \theta_2, \ldots, \theta_n)$ and ${\boldsymbol \tau} = (\tau_1, \tau_2, \ldots, \tau_n)$, the gradient may be expressed succinctly as

\begin{equation*}
\nabla L({\boldsymbol \theta}) = -{\boldsymbol \tau}.
\end{equation*}

\section{The seed solution}
\label{sec:3}
The limiting case of the problem -- as the minimum turning radius $\rho$ is taken to zero -- suggests a seed solution which performs very well in experiments. Refer to Figure \ref{fig:1} (right) for an illustration. Let ${\bf \hat e}_i$ denote the unit vector pointing from node $i$ to node $\mbox{mod}(i + 1, n)$. Moreover, let ${\bf v}_i = {\bf \hat e}_{\mbox{mod}(i - 1, n, 1)} + {\bf \hat e}_i$ and ${\bf \hat v}_i = {\bf v}_i/\|{\bf v}_i\|$. As explained in the previous section, when the length of the tour through a fixed sequence of points is minimised, the two torques acting on a pair of disks are of equal magnitude and opposite sign. This equilibrium condition translates into the requirement that the two arcs adjacent to a node have the same length and orientation. In the limiting case -- as the minimum turning radius $\rho$ tends toward zero -- this means that the unit tangent vector $(\cos \theta_i, \sin \theta_i)$ tends toward ${\bf \hat v}_i$. The vectors ${\bf v}_1, {\bf v}_2, \ldots, {\bf v}_n$ determine a choice of angles $(\theta_1, \theta_2, \ldots, \theta_n)$ which may be used to seed the gradient descent method. As we will see in Section \ref{sec:7}, it turns out that this seed solution performs very well in practice.

\section{Bounding the optimality gap}
\label{sec:4}
In this section, we use the local convexity of the length function to bound the optimality gap between the objective value of the incumbent solution and that of the optimal solution obtained via gradient descent -- at this stage, we continue to focus on the length of a tour through a fixed sequence of points. For this purpose, we provide alternate and more concise proofs of the following proposition and theorem from the work of Goaoc et al. (cf. \cite[Propositions 35 and 36, pp. 30--41]{Goaoc}).

\begin{proposition}
\label{pro:1}
If both circular arcs of a $CSC$ curve connecting two directed points are strictly shorter than $\pi$, then it is the shortest amongst all such $CSC$ curves.
\end{proposition}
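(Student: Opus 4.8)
The plan is to extract the proposition from a monotonicity property of the length along a rotation of one endpoint direction, the monotonicity coming directly from the first-variation formula of Theorem~\ref{the:1}. Fix the initial directed point and the terminal position, fix a choice of turning senses (one of the at most four $CSC$ words), and regard the length $\ell$ of the resulting $CSC$ curve as a function of the terminal angle $\theta$; write $\psi\in[0,2\pi)$ for the length of the terminal arc and $C$ for the terminal circle. Applying Theorem~\ref{the:1} with $\mathbf p_4$ the terminal point and unit angular velocity gives $\ell'(\theta)=-(\mathbf r\times\hat{\mathbf u})\cdot\hat{\mathbf z}$, where $\mathbf r$ is the chord of the terminal arc from the terminal point to the foot of the straight segment on $C$ and $\hat{\mathbf u}$ is the unit tangent to $C$ at that foot. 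Since the angle between that chord and the tangent is half the arc it subtends (the tangent--chord angle), one gets $(\mathbf r\times\hat{\mathbf u})\cdot\hat{\mathbf z}=\pm\,2\sin^{2}(\psi/2)=\pm(1-\cos\psi)$, the sign recording the turning sense of the terminal arc. Hence $\ell$ is weakly monotone in $\theta$ with $|\ell'|=1-\cos\psi\ge 0$, and it is continuous in $\theta$ except at the single value where the terminal arc must ``unwind'': there $\psi$ jumps from $2\pi^{-}$ to $0^{+}$ and $\ell$ jumps by $2\pi$. The same argument at the initial directed point gives $|\partial\ell/\partial\theta_1|=1-\cos\psi_1$.

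These facts describe $\ell$, for each $CSC$ word, as a monotone sawtooth with a single jump of height $2\pi$; in particular its infimum over the endpoint angle is attained as the corresponding arc degenerates to a point, where the word reduces to a $CS$ or $SC$ curve. I would then use this to order the words at the given terminal configuration. Starting from the given curve $\gamma^{\star}$, whose arcs both have length less than $\pi$, rotate $\theta$ (and, if needed, $\theta_1$) monotonically until one of the arcs of $\gamma^{\star}$ shrinks to $0$; throughout this rotation both arcs of $\gamma^{\star}$ stay shorter than $\pi$, so by the formula above the length of the $\gamma^{\star}$-branch changes by a controlled amount, namely $\int(1-\cos\psi)\,d\theta$ over a range on which $1-\cos\psi$ stays small, while the branch of any competing word changes by the analogous signed amount. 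At the degenerate configuration $\gamma^{\star}$ has become a $CS$ (or $SC$) curve whose remaining arc is still shorter than $\pi$, and the proposition reduces to a finite comparison among the at most four words, now partly degenerate, which is settled by a direct computation and which is invariant under the reflection of the plane interchanging left and right turns, leaving only a couple of genuinely distinct cases.

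The step I expect to be the main obstacle is exactly this last reduction. One must keep careful track of which branch of the sawtooth each competing word sits on during the rotation -- in particular whether its arc crosses the unwinding jump -- and one must verify that the terminal (and initial) arc of $\gamma^{\star}$ is genuinely the shortest such arc available among all $CSC$ words joining the two directed points, which is where the hypothesis ``both arcs shorter than $\pi$'' does its real work. The analytic input is minimal -- only the inequality $1-\cos\psi\ge 0$ from Theorem~\ref{the:1} and the value $2\pi$ of the unwinding jump -- so once the combinatorics of the four words is organised this way, the proof should be short.
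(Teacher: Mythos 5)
Your first-variation computation is sound: applying Theorem~\ref{the:1} at the terminal point does give $|\ell'(\theta)|=2\sin^2(\psi/2)=1-\cos\psi$ via the tangent--chord angle, so for each fixed $CSC$ word the length is indeed a monotone sawtooth in the terminal angle with a single $2\pi$ unwinding jump. But the argument as proposed has a genuine gap precisely at the step you flag as the main obstacle, and it is not a gap of bookkeeping only. To conclude anything at the \emph{original} configuration from a comparison at the degenerate one, you must control the sign of $\frac{d}{d\theta}\bigl(\ell_w-\ell_{\gamma^\star}\bigr)=\pm(1-\cos\psi_w)\mp(1-\cos\psi_{\gamma^\star})$ along the whole rotation; when the competing word $w$ turns the same way at the terminal point this reduces to comparing $\cos\psi_w$ with $\cos\psi_{\gamma^\star}$, which you have no a priori handle on, and when it turns the opposite way the difference is monotone in the wrong direction for half the words. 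On top of that, a competitor's arc may cross its own unwinding jump during the rotation, shifting its length by $2\pi$, and the terminal ``finite comparison among the at most four words, now partly degenerate'' is asserted rather than performed. The hypothesis that both arcs of $\gamma^\star$ are shorter than $\pi$ never actually enters your argument at an identifiable point, which is a warning sign, since the statement is false without it.

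For contrast, the paper's proof is a static comparison at the given configuration, with no deformation: one reflects the initial (and, if needed, terminal) arc of an arbitrary competitor $\gamma_2$ through the radical-type line between the two tangent circles at each endpoint, producing a curve $\tilde\gamma_2$ of equal length that \emph{shares} the arcs ${\bf p}_1{\bf p}_2$ and ${\bf p}_3{\bf p}_4$ with $\gamma_1$; the middle portion of $\gamma_1$ is then a straight segment joining the same two points, hence shorter. The one exceptional case (competitor sharing a circle with $\gamma_1$ but with a shorter arc on it) is handled by projecting the competitor's straight segment onto the arc of $\gamma_1$ and checking the projection is length-nonincreasing, which is exactly where the hypothesis ``arc $<\pi$'' is used. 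If you want to salvage your route, you would need to prove a comparison lemma for the derivatives of two words' lengths along the rotation, which appears to be at least as hard as the reflection argument itself.
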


\begin{proof}
Denote the $CSC$ curve ${\bf p}_1 {\bf p}_2 {\bf p}_3 {\bf p}_4$ by $\gamma_1$ and suppose that the arcs ${\bf p}_1 {\bf p}_2$ and ${\bf p}_3 {\bf p}_4$ are strictly shorter than $\pi$. Let $\gamma_2$ denote a distinct $CSC$ curve connecting the directed points $({\bf p}_1, \theta_1)$ and $({\bf p}_4, \theta_4)$. There are two cases to consider. In case 1, the curves $\gamma_1$ and $\gamma_2$ each contain a circular arc belonging to the same circle and, in particular, the arc of $\gamma_1$ is longer than that of $\gamma_2$. See Figure \ref{fig:3} for an illustration. Case 2 covers all other possibilities.

Case 2: Since the argument concerning case 2 is simpler than that concerning case 1, we will consider it first. If the circular arcs of $\gamma_1$ and $\gamma_2$ departing from the point ${\bf p}_1$ belong to different circles, then construct a line $l_1$ between these two circles. See Figure \ref{fig:4}. Do the same for the point ${\bf p}_4$. If we have constructed a line through the point ${\bf p}_1$, then we reflect the first part of the curve $\gamma_2$ through this line. Similarly, if we have constructed a line through the point ${\bf p}_4$, then we reflect the last part of the curve $\gamma_2$ through this line. Thus, we produce a curve $\tilde{\gamma}_2$, whose length is equal to that of $\gamma_2$. Observe that the curves $\gamma_1$ and $\tilde{\gamma}_2$ share the circular arcs ${\bf p}_1 {\bf p}_2$ and ${\bf p}_3 {\bf p}_4$. However, the part of $\gamma_1$ between the points ${\bf p}_2$ and ${\bf p}_3$ is shorter than that of $\tilde{\gamma}_2$, because it is a straight line. This resolves case 2.

Case 1: Suppose that the circular arcs of $\gamma_1$ and $\gamma_2$ departing from the point ${\bf p}_1$ belong to different circles. Construct a line $l_1$ between these two circles. On the other hand, construct a line $l_2$ through the point ${\bf p}_3$ which runs perpendicular to the straight segment of $\gamma_1$. Reflect the first part of $\gamma_2$ through the line $l_1$. Observe that the curves $\gamma_1$ and $\tilde{\gamma}_2$ share the circular arc ${\bf p}_1 {\bf p}_2$. Moreover, the straight segment of $\gamma_1$ is shorter than the part of $\gamma_2$ between the point ${\bf p}_2$ and the line $l_2$. Finally, we need to show that the circular arc ${\bf p}_3 {\bf p}_4$ is shorter than the part of $\tilde{\gamma}_2$ between the line $l_2$ and the point ${\bf p}_4$. In particular, we need to show that the circular arc ${\bf p}_3 {\bf q}_2$ is shorter than the straight segment ${\bf q}_1 {\bf q}_2$, where ${\bf q}_1$ is the point of intersection of the line $l_2$ and the curve $\tilde{\gamma}_2$, and ${\bf q}_2$ is the point where the curve $\tilde{\gamma}_2$ meets the arc ${\bf p}_3 {\bf p}_4$. To see this, consider the map $\psi$ which projects the straight segment ${\bf q}_1 {\bf q}_2$ onto the circular arc ${\bf p}_3 {\bf q}_2$. The slope of the circular arc ${\bf p}_3 {\bf q}_2$ through a point ${\bf x}$ is less than or equal to the slope of the straight segment ${\bf q}_1 {\bf q}_2$ through the point $\psi^{-1}({\bf x})$. In other words, the Jacobian determinant of the map $\psi$ is less than or equal to 1, meaning that it shrinks the straight segment ${\bf q}_1 {\bf q}_2$.
\end{proof}

\begin{figure}[h]
    \centering
    \includegraphics[width=\textwidth]{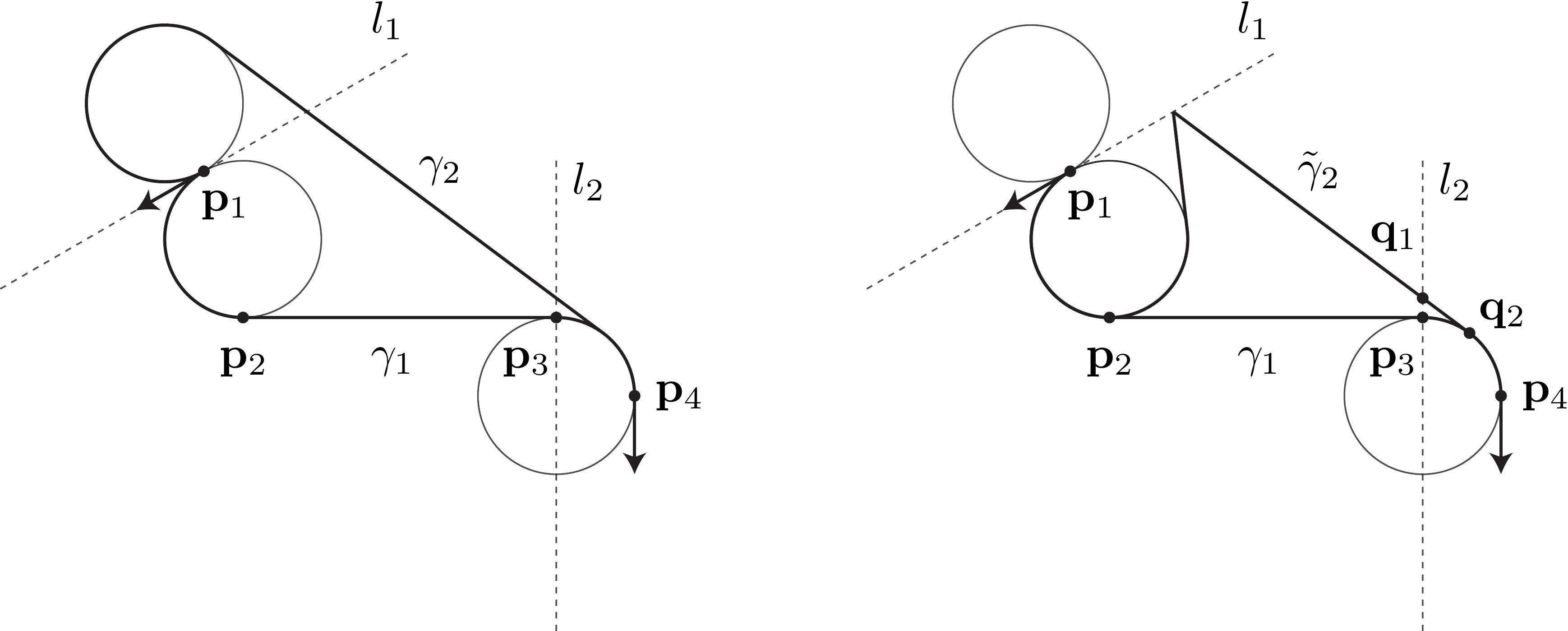}
    \caption{In case 1, the curves $\gamma_1$ and $\gamma_2$ each contain a circular arc belonging to the same circle and, in particular, the arc of $\gamma_1$ is longer than that of $\gamma_2$.}
    \label{fig:3}
\end{figure}

\begin{figure}[h]
    \centering
    \includegraphics[width=\textwidth]{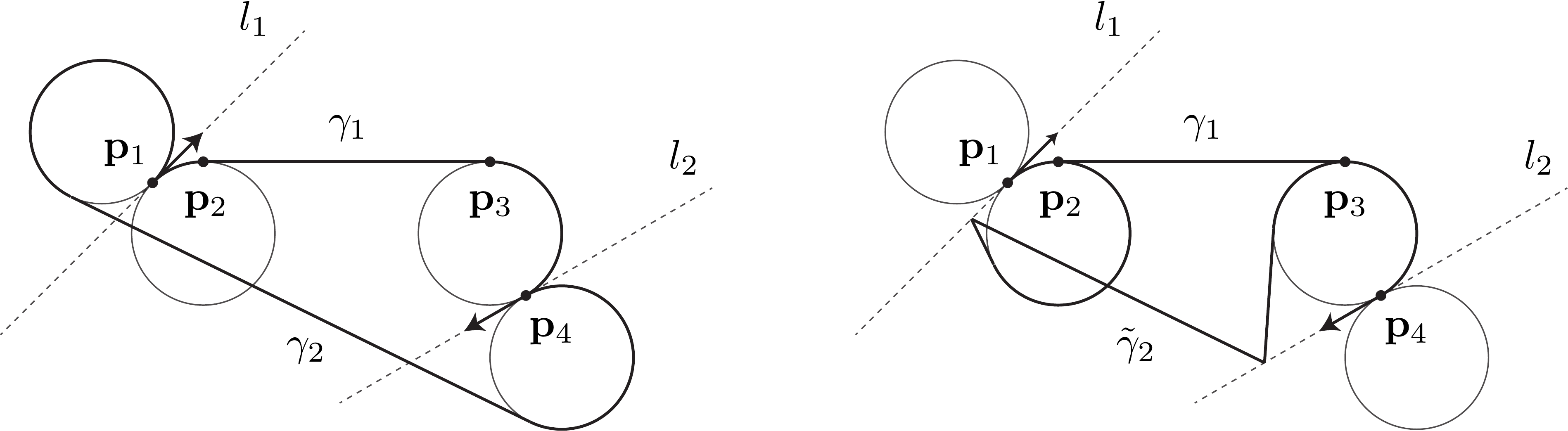}
    \caption{Case 2 covers all possibilities aside from case 1.}
    \label{fig:4}
\end{figure}

The following corollary will be used in Section 6 to obtain an upper bound on the difference in length between an optimal DTSP solution and the optimal ETSP solution for a given point set.

\begin{corollary}
\label{cor:1}
If the circular arc of a $CS$ curve connecting a directed and undirected point is strictly shorter than $\pi$, then it is the shortest amongst both such $CS$ curves.
\end{corollary}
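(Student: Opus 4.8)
The plan is to prove directly that, among the (at most) two $CS$ curves from the directed point $({\bf p}_1,\theta_1)$ to the undirected point ${\bf p}_3$, the one whose circular arc is strictly shorter than $\pi$ is never longer than the other; this may be regarded as the degenerate case of Proposition~\ref{pro:1} in which the terminal circle collapses onto ${\bf p}_3$, but the following self-contained argument is more direct. Place ${\bf p}_1$ at the origin with $\theta_1$ pointing along the positive $x$-axis, let $\ell_0$ be the $x$-axis, and let $O_1 = (0,1)$ and $O_2 = (0,-1)$ be the centres of the two unit circles $K_1, K_2$ realising the tour direction at ${\bf p}_1$; note that $O_1$ and $O_2$ are mirror images across $\ell_0$ and that $\ell_0$ is the perpendicular bisector of the segment $O_1 O_2$. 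Write $\gamma_1$ for the $CS$ curve that turns along $K_1$ until its straight continuation runs to ${\bf p}_3$, with arc length $\alpha_1 < \pi$ by hypothesis, and $\gamma_2$ for the corresponding curve along $K_2$, with arc length $\alpha_2$. The long-path hypothesis forces ${\bf p}_3$ to lie outside both circles, so each straight segment is a tangent line from ${\bf p}_3$ and has length $s_i = \sqrt{|{\bf p}_3 - O_i|^2 - 1}$; thus it suffices to show $\alpha_1 + s_1 \le \alpha_2 + s_2$.

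The first step is to compare the straight parts. After a counterclockwise turn through $\alpha_1$ along $K_1$, the current point is $(\sin\alpha_1,\, 1-\cos\alpha_1)$ with forward direction $(\cos\alpha_1,\, \sin\alpha_1)$, so every point $(\sin\alpha_1 + t\cos\alpha_1,\ 1-\cos\alpha_1 + t\sin\alpha_1)$ with $t \ge 0$ on the straight continuation has $y$-coordinate $(1-\cos\alpha_1) + t\sin\alpha_1 \ge 0$ whenever $\alpha_1 \in [0,\pi)$. Hence ${\bf p}_3$ lies in the closed half-plane bounded by $\ell_0$ and containing $O_1$, and since $\ell_0$ is the perpendicular bisector of $O_1 O_2$ this gives $|{\bf p}_3 - O_1| \le |{\bf p}_3 - O_2|$, so $s_1 \le s_2$.

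The second step is to compare the arcs. If $\alpha_2 \ge \pi$ then $\alpha_1 < \pi \le \alpha_2$ and we are done. Otherwise $\alpha_2 < \pi$, so the computation above applied to $\gamma_2$ places ${\bf p}_3$ in the closed half-plane bounded by $\ell_0$ and containing $O_2$ as well; intersecting the two half-planes forces ${\bf p}_3 \in \ell_0$. Reflection across $\ell_0$ is then a symmetry of the whole configuration fixing $({\bf p}_1,\theta_1)$ and ${\bf p}_3$ while interchanging $K_1$ and $K_2$, so it carries $\gamma_1$ to $\gamma_2$; in particular $\alpha_1 = \alpha_2$. In every case $\alpha_1 \le \alpha_2$, and adding $s_1 \le s_2$ yields $\alpha_1 + s_1 \le \alpha_2 + s_2$, i.e.\ $\gamma_1$ is the shorter of the two $CS$ curves.

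I expect the only delicate point to be the sign computation in the first step — the assertion that an arc of length less than $\pi$ forces ${\bf p}_3$ onto the $O_1$-side of $\ell_0$; everything else is routine. It is worth recording that this same computation shows $\gamma_1$ and $\gamma_2$ cannot both have arcs of length less than $\pi$ unless ${\bf p}_3 \in \ell_0$, which is precisely the symmetric case in which the two $CS$ curves have equal length.
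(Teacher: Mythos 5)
Your proof is correct, and it takes a genuinely different route from the one in the paper. The paper argues by unfolding: it reflects the initial arc of the competing curve $\gamma_2$ through the line between the two circles, so that the reflected curve $\tilde{\gamma}_2$ shares the arc ${\bf p}_1{\bf p}_2$ with $\gamma_1$, and then observes that the remaining portion of $\gamma_1$ is a straight segment while the remaining portion of $\tilde{\gamma}_2$ joining the same endpoints is not; this is exactly the degenerate version of the reflection argument used for Proposition~\ref{pro:1}. You instead compare the two candidate lengths term by term in explicit coordinates: the tangent lengths satisfy $s_1 \le s_2$ because an arc shorter than $\pi$ followed by a forward ray confines ${\bf p}_3$ to the closed half-plane on the $O_1$ side of the common tangent line at ${\bf p}_1$, and the arc lengths satisfy $\alpha_1 \le \alpha_2$ either trivially (when $\alpha_2 \ge \pi$) or by the mirror symmetry forced when ${\bf p}_3$ lies on that tangent line. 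All the individual steps check out: the parametrisation $(\sin\alpha_1,\,1-\cos\alpha_1)$ with direction $(\cos\alpha_1,\,\sin\alpha_1)$ is right, the long-path hypothesis does place ${\bf p}_3$ outside both turning circles so the tangent-length formula applies, and uniqueness of the $CS$ curve on each circle (the traversal direction picks out one of the two tangent lines from ${\bf p}_3$) justifies identifying the reflection of $\gamma_1$ with $\gamma_2$ in the symmetric case. What your approach buys is a sharper, quantitative conclusion --- both arcs can be shorter than $\pi$ only when ${\bf p}_3$ lies on the tangent line at ${\bf p}_1$ and the two curves then have equal length --- at the cost of being tied to this specific two-circle configuration; the paper's reflection argument is less explicit but is the same mechanism that handles the full $CSC$ case of Proposition~\ref{pro:1}, of which the corollary is presented as a simplification.
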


\begin{proof}
The proof requires a simpler version of the argument given for Proposition \ref{pro:1}. Denote the $CS$ curve ${\bf p}_1 {\bf p}_2 {\bf p}_3$ by $\gamma_1$ and suppose that the arc ${\bf p}_1 {\bf p}_2$ is strictly shorter than $\pi$. Let $\gamma_2$ denote the distinct $CS$ curve connecting the directed point $({\bf p}_1, \theta_1)$ and the undirected point ${\bf p}_3$. Construct a line $l$ between the two circles containing the arcs of $\gamma_1$ and $\gamma_2$, respectively. See Figure \ref{fig:5}. Reflect the first part of the curve $\gamma_2$ through this line to obtain a curve $\tilde{\gamma}_2$ whose length is equal to that of $\gamma_2$. The curves $\gamma_1$ and $\tilde{\gamma}_2$ share the circular arc ${\bf p}_1 {\bf p}_2$. However, the part of $\gamma_1$ between the points ${\bf p}_2$ and ${\bf p}_3$ is shorter than that of $\tilde{\gamma}_2$, which completes the proof.
\end{proof}

\begin{figure}[h]
    \centering
    \includegraphics[width=\textwidth]{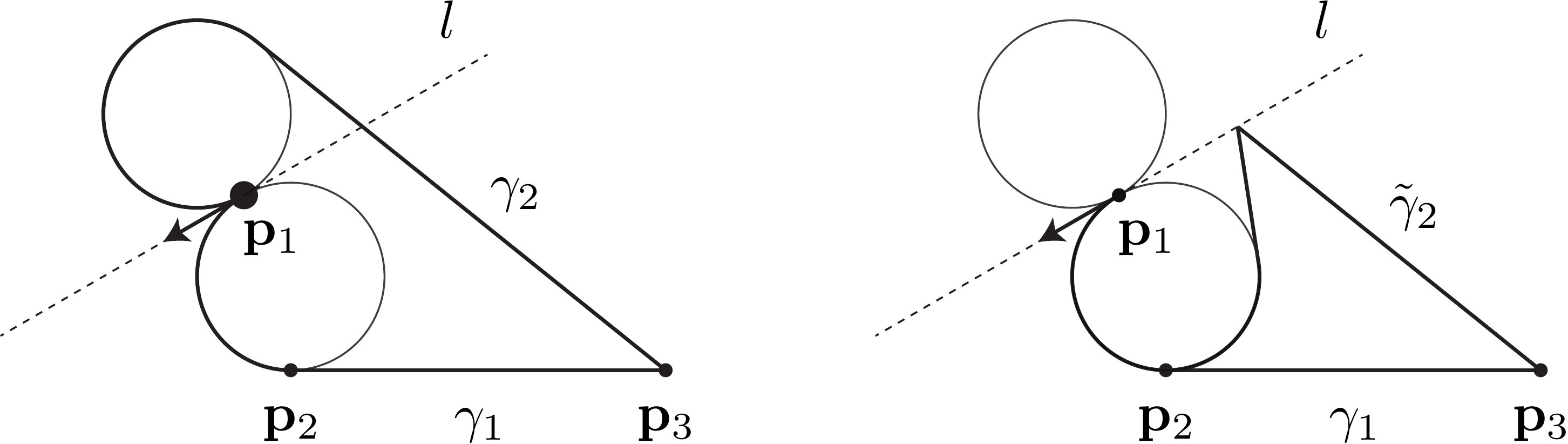}
    \caption{A similar reflection argument applies to the case of a $CS$ curve.}
    \label{fig:5}
\end{figure}

\begin{theorem}
\label{the:3}
The length of a tour through a fixed sequence of points is locally strictly convex as a function of the angles $\theta_1, \theta_2, \ldots, \theta_n$ if each arc of a $CSC$ curve is strictly shorter than $\pi$.
\end{theorem}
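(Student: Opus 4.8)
The plan is to differentiate the identity $\nabla L(\boldsymbol\theta) = -\boldsymbol\tau$ of Theorem \ref{the:2} once more and to check that the resulting Hessian is positive definite on the region in question. The structural point that makes this tractable is that the length splits as a sum $L(\boldsymbol\theta) = \sum_{i=1}^{n}\ell_i(\theta_i,\theta_{i+1})$ (indices modulo $n$), where $\ell_i$ is the length of the $i$th $CSC$ edge and depends only on the two angles at its endpoints. Hence $\nabla^2 L$ is a cyclic tridiagonal matrix obtained by summing, over $i$, the $2\times 2$ Hessian $H_i$ of $\ell_i$ placed in the rows and columns indexed by $i$ and $i+1$. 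It therefore suffices to show that each $H_i$ is positive definite: if $v^{\top}\nabla^2 L\,v = 0$, then every summand $v^{\top}H_iv\ge 0$ vanishes, which forces $(v_i,v_{i+1}) = 0$ for all $i$, whence $v = 0$.

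To obtain $H_i$, isolate the $i$th edge, joining $({\bf x}_i,\theta_i)$ to $({\bf x}_{i+1},\theta_{i+1})$; after a reflection it may be assumed of type $LSL$, the other subtypes being handled by the same computation. Let ${\bf O}$ join the centres of the two unit turning circles, so that ${\bf O}$ is an affine function of $(\cos\theta_i,\sin\theta_i)$ and $(\cos\theta_{i+1},\sin\theta_{i+1})$; writing $d = |{\bf O}|$ for the length of the straight segment and $\alpha,\beta$ for the lengths of the arcs at ${\bf x}_i$ and ${\bf x}_{i+1}$, one has $\ell_i = d + (\theta_{i+1}-\theta_i) + \mathrm{const}$ for an $LSL$ edge. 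Differentiating this together with $\alpha = \arg{\bf O}-\theta_i$ and $\beta = \theta_{i+1}-\arg{\bf O}$ gives $\partial\ell_i/\partial\theta_i = \cos\alpha-1$ and $\partial\ell_i/\partial\theta_{i+1} = 1-\cos\beta$, consistently with Theorem \ref{the:2}, and one further differentiation yields
\begin{equation*}
H_i = \begin{pmatrix} \frac{\sin^2\alpha}{d}+\sin\alpha & \frac{\sin\alpha\sin\beta}{d} \\ \frac{\sin\alpha\sin\beta}{d} & \frac{\sin^2\beta}{d}+\sin\beta \end{pmatrix}, \qquad \det H_i = \sin\alpha\,\sin\beta\left(1+\frac{\sin\alpha+\sin\beta}{d}\right).
\end{equation*}
The hypothesis enters here: both arcs being strictly shorter than $\pi$ means exactly $\alpha,\beta\in(0,\pi)$, so $\sin\alpha,\sin\beta>0$, and since $d>0$ both diagonal entries and the determinant of $H_i$ are strictly positive, so $H_i$ is positive definite. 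Together with the reduction of the first paragraph this proves the theorem.

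The difficulty I anticipate is bookkeeping rather than anything conceptual. One must redo the one-edge computation for each of the four $CSC$ subtypes ($LSL$, $RSR$, $LSR$, $RSL$), keeping track of the fact that a clockwise ($R$) arc contributes a signed turning angle equal to minus its length, and that for $LSR$ and $RSL$ the straight segment is the internal common tangent, whose direction differs from that of ${\bf O}$ by an angle depending on $d$; in each case the Hessian has the same form, with positive diagonal and positive determinant once the arcs lie in $(0,\pi)$. One should also note that Proposition \ref{pro:1} ensures that in this regime the $CSC$ edge is the unique shortest $CSC$ curve and its subtype does not vary, so that $L$ is genuinely twice continuously differentiable on the region considered and the Hessian argument applies.
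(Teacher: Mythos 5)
Your proof is correct, and it reaches the paper's conclusion by a genuinely different route. Both arguments rest on the same structural observation — $L$ splits edge-by-edge into terms $\ell_i(\theta_i,\theta_{i+1})$, so it suffices to show each edge contributes a positive definite $2\times 2$ block — but the paper establishes this geometrically, by taking a second variation of the torque expression $({\bf r}\times{\bf \hat u})\cdot{\bf \hat z}$ and using a change of frame (fix one disk, rotate the rest of the figure) to argue that $({\bf r}'\times{\bf \hat u})\cdot{\bf \hat z}=0$ while $({\bf r}\times{\bf \hat u}')\cdot{\bf \hat z}$ has a definite sign when the arc is shorter than $\pi$. You instead parametrise the edge explicitly and compute the Hessian in closed form. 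Your formulas check out: for $LSL$ one indeed gets $\partial\ell_i/\partial\theta_i=\cos\alpha-1$, $\partial^2\ell_i/\partial\theta_i^2=\sin^2\alpha/d+\sin\alpha$, mixed term $\sin\alpha\sin\beta/d$, and $\det H_i=\sin\alpha\sin\beta\bigl(1+(\sin\alpha+\sin\beta)/d\bigr)$, so the hypothesis $\alpha,\beta\in(0,\pi)$ enters exactly where you say it does. The one step you defer — that $LSR/RSL$ gives ``the same form'' — is asserted rather than computed, but it is true: carrying out the analogous calculation with $\psi=\arg{\bf O}+\arctan(2/d)$ and $d=\sqrt{|{\bf O}|^2-4}$, the first derivatives change sign at the second endpoint (reflecting the reversed turning sense) while the Hessian comes out identical to the $LSL$ case, so positive definiteness survives. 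Your version buys transparency on precisely the point where the paper's argument is thinnest: the paper's sign claims for $({\bf r}_{12}\times{\bf \hat u}_1')\cdot{\bf \hat z}$ and $({\bf r}_{21}\times{\bf \hat u}_1')\cdot{\bf \hat z}$ only cleanly control the diagonal of the quadratic form, whereas your explicit $H_i$ handles the cross terms without hand-waving; it also yields formulas usable for a Newton step. What it costs is uniformity — the torque formalism is subtype-agnostic, while you must run the computation for each of the two reflection classes of $CSC$ curve — and you should state explicitly that $d>0$ (guaranteed in the long path case) so that the $1/d$ terms are finite and, as you note via Proposition \ref{pro:1}, that the subtype is locally constant so $L$ is twice differentiable where you differentiate it.
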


\begin{proof}
We first note that by Proposition \ref{pro:1}, there exists a neighbourhood about the given configuration in which the types of the respective Dubins curves which constitute the tour remain fixed. Now, consider the Dubins curve which connects the $i$th and $(i + 1)$th directed points. By Theorem \ref{the:1}, if the configuration is perturbed in the direction of an arbitrary vector $(\omega_1, \omega_2, \ldots, \omega_n)$, then the first variation of the length of the $i$th Dubins curve is given by

\begin{equation*}
(-\omega_i {\bf r}_{12} \times {\bf \hat u}_1 + \omega_2 {\bf r}_{21} \times {\bf \hat u}_1) \cdot {\bf \hat z}.
\end{equation*}

It follows that the second variation of the length of the $i$th Dubins curve is given by

\begin{equation*}
(-\omega_i ({\bf r}_{12}' \times {\bf \hat u}_1 + {\bf r}_{12} \times {\bf \hat u}_1') + \omega_2 ({\bf r}_{21}' \times {\bf \hat u}_1 + {\bf r}_{21} \times {\bf \hat u}_1')) \cdot {\bf \hat z}.
\end{equation*}

We claim that $({\bf r}_{12}' \times {\bf \hat u}_1) \cdot {\bf \hat z} = 0$ and $({\bf r}_{21}' \times {\bf \hat u}_1) \cdot {\bf \hat z} = 0$, whereas $({\bf r}_{12} \times {\bf \hat u}_1') \cdot {\bf \hat z} < 0$ and $({\bf r}_{21} \times {\bf \hat u}_1') \cdot {\bf \hat z} > 0$ when the arcs of the $CSC$ curve are strictly shorter than $\pi$. This proves that the length function is strictly convex, since the perturbation vector was assumed to be arbitrary. It is easy to prove the claim if we view the situation after performing a change of basis. Denote the $CSC$ curve by ${\bf p}_1 {\bf p}_2 {\bf p}_3 {\bf p}_4$. Referring to Figure \ref{fig:6}, suppose that instead of rotating the first disk about the point ${\bf p}_1$, we instead fix the disk and rotate the rest of the figure about it. In this case, it is clear that the point of tangency ${\bf r}_{12}$ travels around the perimeter of the first disk, in which case ${\bf r}_{12}' \parallel {\bf \hat u}_1$ and $({\bf r}_{12}' \times {\bf \hat u}_1) \cdot {\bf \hat z} = 0$. On the other hand, the vector ${\bf \hat u}_1'$ points orthogonal to ${\bf \hat u}_1$ and into the first disk, so $({\bf r}_{12} \times {\bf \hat u}_1') \cdot {\bf \hat z} < 0$ when the first arc is strictly shorter than $\pi$. An analogous argument demonstrates that $({\bf r}_{21}' \times {\bf \hat u}_1) \cdot {\bf \hat z} = 0$ and $({\bf r}_{21} \times {\bf \hat u}_1') \cdot {\bf \hat z} > 0$.
\end{proof}

\begin{figure}[h]
    \centering
    \includegraphics[width=0.7\textwidth]{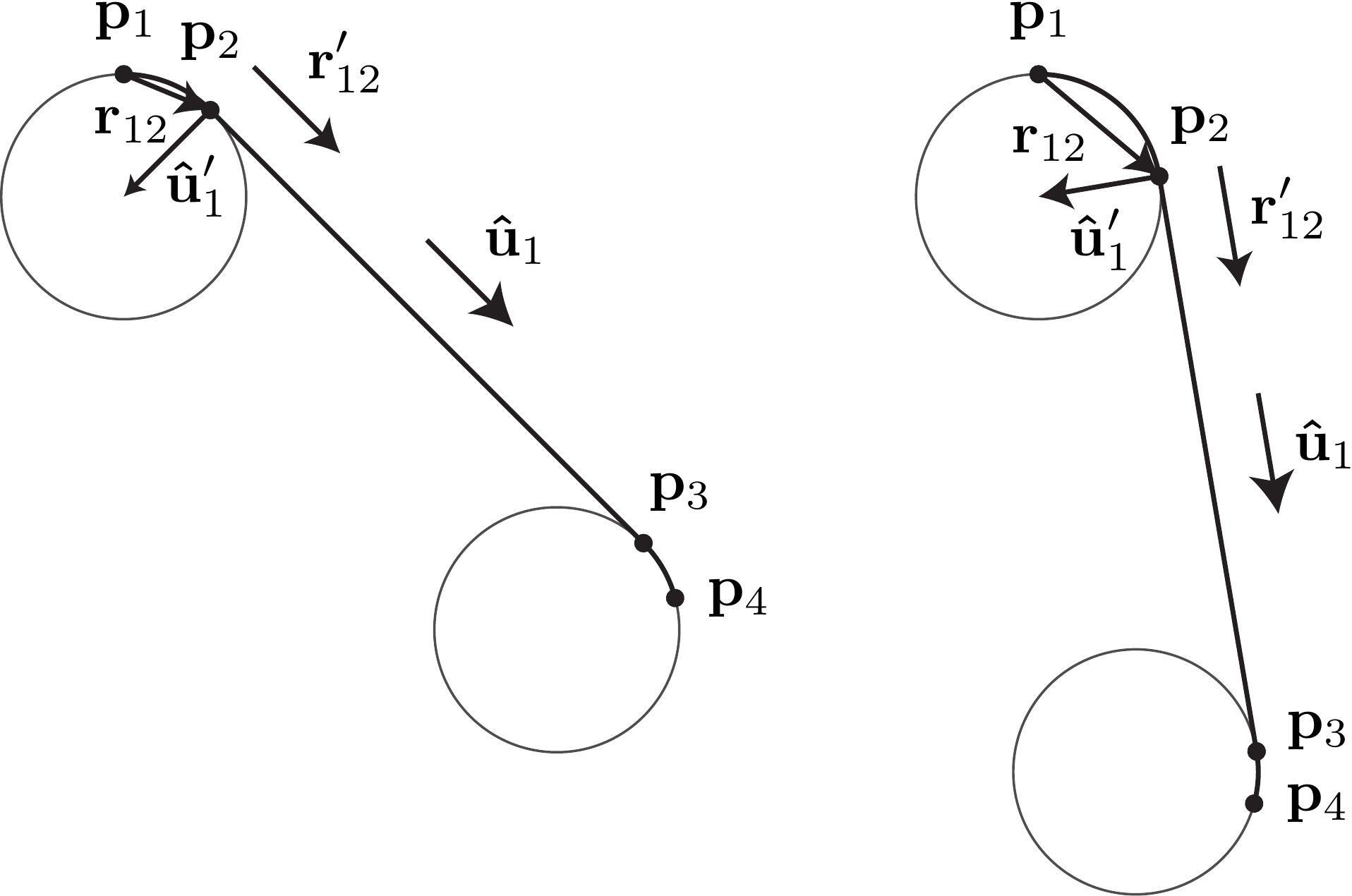}
    \caption{Perform a change of basis, fixing the first disk and rotating the rest of the figure about it.}
    \label{fig:6}
\end{figure}

\begin{theorem}
\label{the:4}
If each arc of a $CSC$ curve of the tour is strictly shorter than $\pi$, then the optimality gap between the incumbent objective value $L(\boldsymbol \theta)$ and the locally optimal objective value $L(\boldsymbol \theta^*)$ of the gradient descent method is bounded from above by $2 \sqrt{n} \pi \| \nabla L(\boldsymbol \theta) \|$.
\end{theorem}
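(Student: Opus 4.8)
The plan is to combine the strict local convexity of $L$ from Theorem~\ref{the:3} with the first-order characterisation of convex functions and the Cauchy--Schwarz inequality; the only further ingredient is a crude bound on the diameter of the torus of angles $(S^{1})^{n}$.

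First I would fix the setting. By Proposition~\ref{pro:1} (as used at the start of the proof of Theorem~\ref{the:3}), on the open set $U\subseteq(S^{1})^{n}$ of angle-tuples for which every circular arc of every $CSC$ piece of the tour is strictly shorter than $\pi$, the Dubins types are locally constant, so $L$ is smooth on $U$, and by Theorem~\ref{the:3} it is strictly convex there. Consistently with the hypothesis I would assume that the gradient descent iterates --- in particular the incumbent $\boldsymbol\theta$ and the limit $\boldsymbol\theta^{*}$ --- lie in $U$, and that a straight segment joining suitable lifts of $\boldsymbol\theta$ and $\boldsymbol\theta^{*}$ stays inside the corresponding lift of $U$, so that $L$ restricted to that segment is a convex function of one real variable.

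Next, since $\boldsymbol\theta^{*}$ is a local minimiser of the smooth function $L$ we have $\nabla L(\boldsymbol\theta^{*})=\mathbf{0}$, and the first-order convexity inequality applied along the segment gives $L(\boldsymbol\theta^{*})\ge L(\boldsymbol\theta)+\nabla L(\boldsymbol\theta)\cdot(\boldsymbol\theta^{*}-\boldsymbol\theta)$. Rearranging and applying Cauchy--Schwarz,
\[
L(\boldsymbol\theta)-L(\boldsymbol\theta^{*})\ \le\ \nabla L(\boldsymbol\theta)\cdot(\boldsymbol\theta-\boldsymbol\theta^{*})\ \le\ \|\nabla L(\boldsymbol\theta)\|\,\|\boldsymbol\theta-\boldsymbol\theta^{*}\|.
\]
Finally I would bound $\|\boldsymbol\theta-\boldsymbol\theta^{*}\|$: each of the $n$ coordinates is a difference of angles, which after choosing representatives in $[0,2\pi)$ has absolute value at most $2\pi$, so $\|\boldsymbol\theta-\boldsymbol\theta^{*}\|\le 2\pi\sqrt{n}$. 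Substituting yields $L(\boldsymbol\theta)-L(\boldsymbol\theta^{*})\le 2\sqrt{n}\,\pi\,\|\nabla L(\boldsymbol\theta)\|$, as claimed. (The per-coordinate bound can be tightened to the angular distance $\pi$, giving the constant $\sqrt{n}\,\pi$; the stated weaker constant is all that is needed.)

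The step I expect to be the main obstacle is the second one: upgrading the \emph{local} convexity of Theorem~\ref{the:3} to convexity along the path actually joining $\boldsymbol\theta$ to $\boldsymbol\theta^{*}$, i.e. showing that this segment does not leave the region $U$ where the arc-length-below-$\pi$ condition holds. One way to handle this is to note that the boundary of $U$ consists of configurations with an arc of length exactly $\pi$, which are strictly suboptimal by Proposition~\ref{pro:1}, so that a gradient descent scheme with a suitably controlled step size started inside $U$ cannot exit it, and then to restrict attention to the connected convex component of $U$ containing the iterates; alternatively one integrates the positive-definite Hessian (the second variation computed in the proof of Theorem~\ref{the:3}) directly along the segment.
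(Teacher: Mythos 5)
Your proposal is correct and follows essentially the same route as the paper: the first-order convexity inequality $L(\boldsymbol\theta^*)\ge L(\boldsymbol\theta)+\nabla L(\boldsymbol\theta)\cdot(\boldsymbol\theta^*-\boldsymbol\theta)$, followed by Cauchy--Schwarz (the paper phrases this as maximising $\Delta\boldsymbol\theta\cdot\nabla L(\boldsymbol\theta)$ over admissible steps, which is the same estimate) and the bound $\|\boldsymbol\theta-\boldsymbol\theta^*\|\le 2\sqrt{n}\,\pi$ coming from the domain $[-\pi,\pi)^n$. The caveat you raise about upgrading local convexity to convexity along the whole segment joining $\boldsymbol\theta$ to $\boldsymbol\theta^*$ is a real one that the paper's proof also passes over silently, so your flagging of it is a point in your favour rather than a divergence from the intended argument.
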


\begin{proof}
By local convexity,

\begin{equation*}
L(\boldsymbol \theta^*) \geq L(\boldsymbol \theta) + (\boldsymbol \theta^* - \boldsymbol \theta) \nabla L(\boldsymbol \theta).
\end{equation*}

It follows that

\begin{align*}
L(\boldsymbol \theta) - L(\boldsymbol \theta^*) &\leq (\boldsymbol \theta - \boldsymbol \theta^*) \cdot \nabla L(\boldsymbol \theta)\\
&\leq \max_{\Delta \boldsymbol \theta} \Delta \boldsymbol \theta \cdot \nabla L(\boldsymbol \theta),
\end{align*}

where $\Delta \boldsymbol \theta$ is an arbitrary step within the region of the domain where the respective types of the $CSC$ curves remain fixed. Since the step $\Delta \boldsymbol \theta$ lies in the domain $[-\pi, \pi)^n$, its magnitude is no larger than $2 \sqrt{n} \pi$. It follows that the righthand side is maximised when $\Delta \boldsymbol \theta = 2 \sqrt{n} \pi \nabla L(\boldsymbol \theta)/\|\nabla L(\boldsymbol \theta)\|$. That is,

\begin{equation*}
L(\boldsymbol \theta) - L(\boldsymbol \theta^*) \leq 2 \sqrt{n} \pi \| \nabla L(\theta) \|.
\end{equation*}
\end{proof}

\begin{figure}[h]
    \centering
    \includegraphics[width=0.45\textwidth]{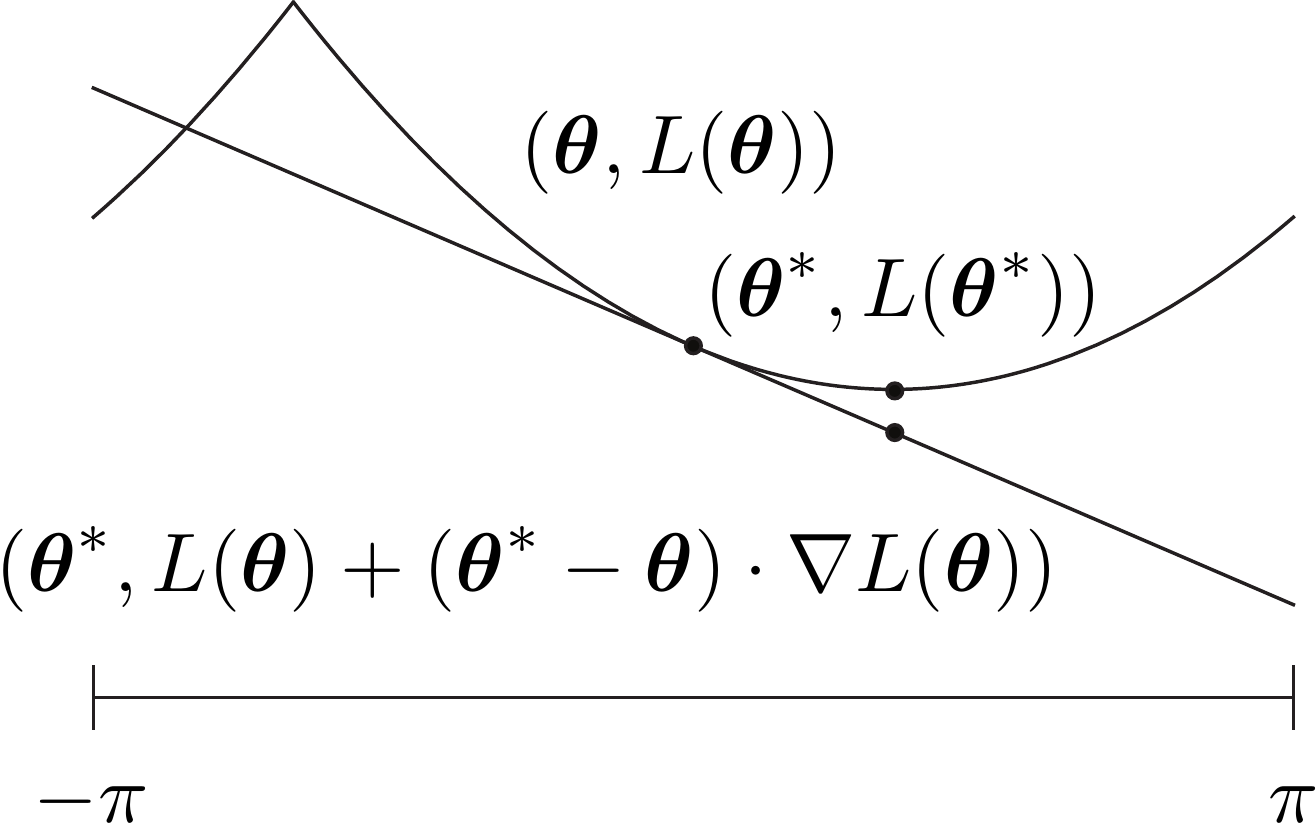}
    \caption{The graph of the length function is locally bounded from below by a supporting hyperplane.}
    \label{fig:7}
\end{figure}

\section{The gradient descent method}
\label{sec:5}
The pseudocode for the gradient descent method for minimising the length of a curvature-constrained tour through a given sequence of points is below.

\begin{algorithm}[H]
\SetAlgoLined
\KwResult{An approximation to the optimal configuration of directions ${\boldsymbol \theta} = (\theta_1, \theta_2, \ldots, \theta_n)$ for a given sequence of points}
 initialization: For $i = 1, 2, \ldots, n$, assign ${\bf v}_i \gets {\bf \hat e}_{\mbox{mod}(i - 1, n, 1)} + {\bf \hat e}_i$ and $(\cos \theta_i, \sin \theta_i) \gets {\bf \hat v}_i$\;
 Fix the learning rate $\alpha$ and tolerance $\epsilon$\;
 \While{$\| \nabla L(\theta) \| > \epsilon$}{
  ${\boldsymbol \theta} = {\boldsymbol \theta} + \alpha {\boldsymbol \tau}$\;
 }
 \caption{The gradient descent method}
\end{algorithm}

\begin{remark}
At each iteration, in order to compute the torques and, by implication, the gradient, we compute the shortest Dubins curves between the various pairs of consecutive points. Since the Dubins curves can be computed independently of one another, as is the case for the torques, the computation time grows linearly in the number of points. These computations may also be performed in parallel.
\end{remark}

\section{The bounding procedure}
\label{sec:6}
We will now describe the bounding procedure to be used in conjunction with the gradient descent method for solving the Dubins traveling salesman problem. First, we solve the euclidean traveling salesman problem. In particular, we solve the ETSP using the Dantzig–Fulkerson–Johnson formulation \cite{Dantzig}. Let $L_E$ denote the length of the optimal ETSP solution. The corresponding DTSP solution produced by the gradient descent method immediately gives an upper bound of $L_D$ on the length of a curvature-constrained tour through the given set of nodes. However, the order of the optimal ETSP solution may not match that of the optimal DTSP solution. So we attempt to generate a distinct solution to the ETSP whose length is between $L_E + \epsilon$ and $L_D$ for some small tolerance $\epsilon$. The lower and upper bounds $L_E + \epsilon$ and $L_D$ may be introduced to the Dantzig–Fulkerson–Johnson integer linear program as linear inequalities. If we succeed in generating a distinct solution, then we may update the values of $L_E$ and $L_D$ before attempting to generate another distinct solution. To summarise, we provide the pseudocode below.

\begin{algorithm}[H]
\SetAlgoLined
\KwResult{A solution to the Dubins traveling salesman problem}
 initialization $i = 1$, $L_E \gets 0$ and $L_D \gets \infty$\;
 \While{While True}{
  Solve the ETSP with bounds $L_E + \epsilon \leq L \leq L_D$\;
  \eIf{A solution $\Gamma_i$ exists}{
   Update the values of $L_E$ and $L_D$\;
   i = i + 1\;
   }{
   Break\;
  }
 }
 Select the shortest DTSP solution.
 \caption{The bounding procedure}
\end{algorithm}

\begin{remark}
Note that a tour may self-intersect locally when two adjacent edges intersect or non-locally when two nonadjacent edges intersect. The presence or absence of a local self-intersection gives rise to two topologically distinct tours and hence to two local minima. However, the bounding procedure does not distinguish between two tours which differ by a local self-intersection, because the two tours are associated with the same sequence of points.

The issue of local self-intersections centres around the notion of a ``sharp turn'' \cite{Goaoc}. Consider three consecutive points ${\bf x}_1, {\bf x}_2$ and ${\bf x}_3$, and denote the angle between the straight segments ${\bf x}_1 {\bf x}_2$ and ${\bf x}_2 {\bf x}_3$ by $\phi$. The triplet ${\bf x}_1, {\bf x}_2, {\bf x}_3$ is called a sharp turn if $\phi \leq \frac{\pi}{2}$ and one of the points ${\bf x}_1$ and ${\bf x}_2$ is within distance $4\rho$ of the straight segment formed by the other two points. If the triplet ${\bf x}_1, {\bf x}_2, {\bf x}_3$ does not form a sharp turn, then no globally shortest tour has a local self-intersection involving these three points. Without going into details, there is a sort of exchange argument to prove this point, which is indicated in Figure \ref{fig:8}. If a triplet does not form a sharp turn, then a self-intersection involving the three points can be eliminated and the tour shortened.

\begin{figure}[h]
    \centering
    \includegraphics[width=0.5\textwidth]{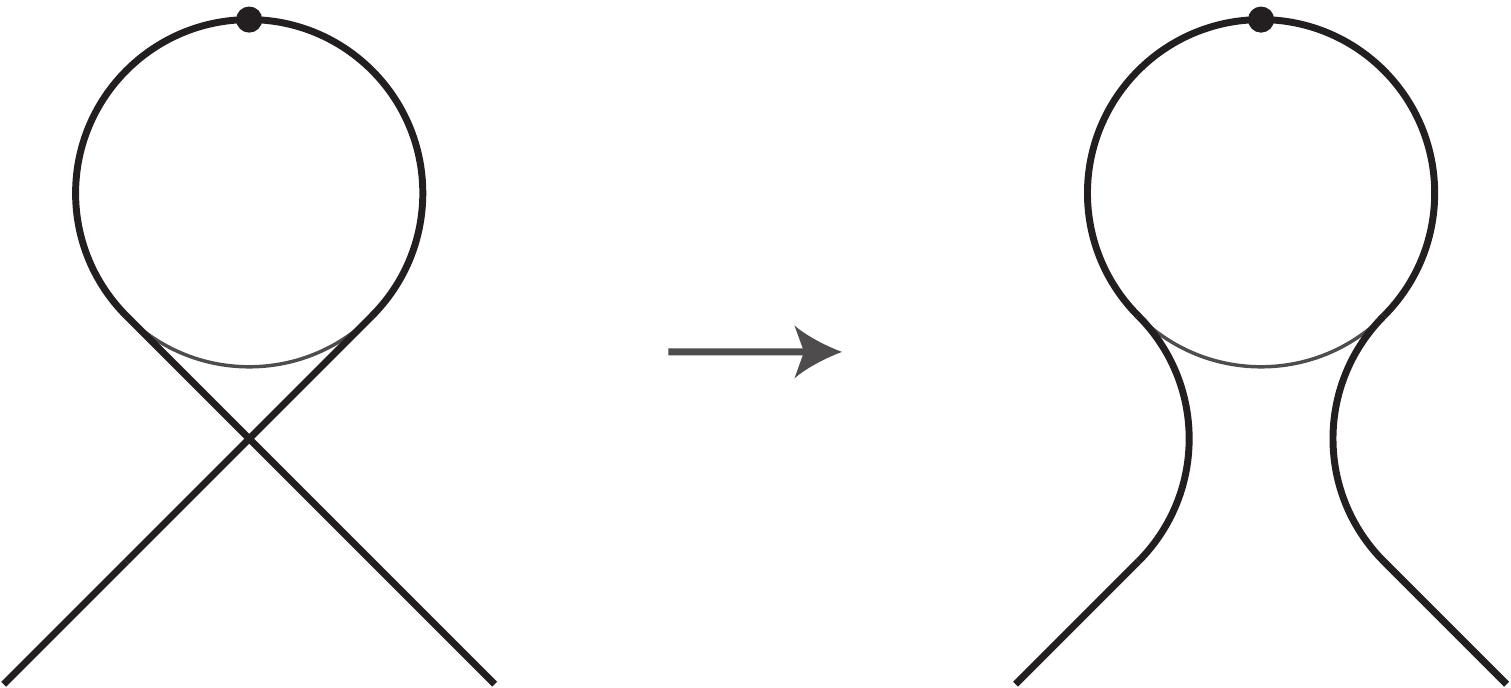}
    \caption{An example of the elimination of a self-intersection using an exchange argument.}
    \label{fig:8}
\end{figure}

We augment the bounding procedure as follows in order to guarantee that it finds the globally shortest tour. Wherever a sharp turn is identified in a sequence of points, we consider the possibility of flipping the direction of the unit tangent vector $(\cos \theta, \sin \theta)$ associated with the second of the three points, that is, $\theta \mapsto \theta + \pi$. All combinations of such flips are considered.
\end{remark}

Now, let us refer to the ratio of the minimum distance between any two points and the minimum turning radius as the scale $s$ of the Dubins traveling salesman problem. We can obtain a crude bound on the gap between the length of the optimal ETSP solution and any other ETSP solution which needs to be considered before we can be sure that we have found the optimal DTSP solution. The length of any ETSP solution is at least $ns$, assuming the minimum turning radius is normalised to unity. According to the following proposition, we only have to consider solutions of the ETSP which have length at most $1 + \pi/s$ times the length of the optimal ETSP solution to ensure that we find the optimal DTSP solution.

\begin{proposition}
In the long path case, that is, the case where any two points are separated by at least four times the minimum turning radius $\rho$, for a point set of size $n$, the length of an optimal $DTSP$ exceeds the length of the optimal $ETSP$ solution by no more than $\pi n$.
\end{proposition}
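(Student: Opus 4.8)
The plan is to upper-bound $L_D$, the length of an optimal DTSP tour, by exhibiting one feasible curvature-constrained closed tour through ${\bf x}_1,\dots,{\bf x}_n$ — taken in the cyclic order of an optimal ETSP solution $\Pi$ of length $L_E$ — whose length is at most $L_E + \pi n$; since $L_D$ does not exceed the length of any feasible tour in any order, this yields $L_D - L_E \le \pi n$. I would normalise $\rho = 1$ as in the rest of the paper, and use that an optimal ETSP tour traces a simple polygon, so that its exterior (turning) angle $\psi_i$ at ${\bf x}_i$ satisfies $|\psi_i| < \pi$ for every $i$ (a turning angle of $\pm\pi$ would force a backtrack, hence a non-optimal tour); in particular $\sum_{i=1}^{n}|\psi_i| < \pi n$.

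The feasible tour would be obtained by ``rounding the corners'' of $\Pi$. At each ${\bf x}_i$ I would replace a neighbourhood of ${\bf x}_i$ by a unit-radius circular arc $A_i$ turning through $\psi_i$, i.e.\ changing heading from ${\bf \hat e}_{\mbox{mod}(i-1,n,1)}$ to ${\bf \hat e}_i$, slid along its circle so that ${\bf x}_i$ is its midpoint (heading ${\bf \hat v}_i$ there, as in the seed solution). Then each endpoint of $A_i$ lies at perpendicular distance $\delta_i := 1 - \cos(|\psi_i|/2) < 1$ from the adjacent edge-line of $\Pi$ and at forward distance $\sin(|\psi_i|/2) < 1$ from ${\bf x}_i$ along that edge. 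Along edge $i$ the outgoing endpoint of $A_i$ and the incoming endpoint of $A_{i+1}$ both carry heading ${\bf \hat e}_i$ but sit on lines parallel to edge $i$; I would join them by a straight segment when they are collinear and otherwise by the minimal ``$S$''-curve — two oppositely oriented unit arcs, each of turn $\arccos(1 - m_i/2)$ — effecting the needed lateral shift $m_i \le \delta_i + \delta_{i+1} < 2$. The hypothesis that any two points are at distance at least $4\rho = 4$ is exactly what makes this fit: on edge $i$ the two arc-stubs consume forward length $< 2$ and the $S$-curve consumes forward length $\sqrt{4m_i - m_i^2} < 2$, for a total $< 4 \le d_i := \|{\bf x}_{i+1} - {\bf x}_i\|$. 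Concatenating the pieces produces a closed $C^1$ curve of maximal curvature $1$ visiting the points in ETSP order — a feasible DTSP tour.

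Bounding its length edge by edge, and accounting for forward progress along each edge, gives
\[
L_D \;\le\; \sum_{i=1}^{n}|\psi_i| \;+\; \sum_{i=1}^{n}\Big(d_i - \sin(|\psi_i|/2) - \sin(|\psi_{i+1}|/2) - \sqrt{4m_i - m_i^2} + 2\arccos\!\big(1 - \tfrac{m_i}{2}\big)\Big).
\]
Since $g(m) := 2\arccos(1 - m/2) - \sqrt{4m - m^2}$ is increasing on $[0,4)$ and $\sum_i|\psi_i| < \pi n$, the whole estimate reduces to showing, with $m_i$ replaced by its maximum $\delta_i + \delta_{i+1}$, that
\[
g(\delta_i + \delta_{i+1}) \;\le\; \sin(|\psi_i|/2) + \sin(|\psi_{i+1}|/2) \qquad (i = 1,\dots,n);
\]
summing over $i$ then gives $L_D \le L_E + \sum_i|\psi_i| < L_E + \pi n$. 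Writing $a = |\psi_i|/2$, $b = |\psi_{i+1}|/2 \in [0,\pi/2)$ and $M = (1-\cos a) + (1-\cos b)$, so $1 - M/2 = \tfrac12(\cos a + \cos b)$, this is the elementary inequality $2\arccos\!\big(\tfrac{\cos a + \cos b}{2}\big) - \sqrt{4M - M^2} \le \sin a + \sin b$ on $[0,\pi/2)^2$.

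The single genuine computation in the argument is thus this last trigonometric inequality, and writing it out cleanly is the step I expect to be the main obstacle; it does hold with a comfortable margin (for example at the extreme $a = b = \pi/2$ its sides are $\pi - 2$ and $2$), and the $4\rho$-separation hypothesis is precisely what lets the corner-rounding construction close up, with everything else being the bookkeeping indicated above. As an alternative to the explicit connecting sub-arcs one could, at each step, replace the constructed connection by the genuinely shortest Dubins $CS$-type connection to the relevant point — which can only shorten the tour, and is where Corollary \ref{cor:1} enters — but it is the accounting through the turning angles $\psi_i$ that produces the coefficient $\pi$.
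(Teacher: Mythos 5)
Your argument is correct in outline but takes a genuinely different route from the paper. The paper's proof is the alternating algorithm of Savla et al.: retain every second edge of the optimal ETSP tour and replace each remaining edge by a shortest $CSC$ Dubins curve chosen so that no cusps appear; Proposition \ref{pro:1} guarantees each of the resulting $n$ arcs is no longer than $\pi$, and a projection argument shows each straight segment ${\bf p}_2{\bf p}_3$ is no longer than the ETSP edge ${\bf p}_1{\bf p}_4$ it replaces, so the excess is at most $\pi n$; odd $n$ is handled with one $CS$ curve via Corollary \ref{cor:1}. Your corner-rounding construction instead follows the ETSP polygon in full and charges the excess to the total absolute turning $\sum_i |\psi_i| \le \pi n$. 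This is more work but yields a strictly stronger conclusion (for a convex ETSP tour the excess is at most $2\pi$, independent of $n$), whereas the paper's argument is shorter because Proposition \ref{pro:1} does the heavy lifting. The one genuine gap in your write-up is that the trigonometric inequality on which everything rests is asserted, not proved; it does hold, and cleanly: with $\cos\theta = \tfrac12(\cos a + \cos b)$ your claim is $2(\theta - \sin\theta) \le \sin a + \sin b$, and since $t - \sin t$ is a convex function of $1-\cos t$ (its derivative with respect to $1-\cos t$ is $\tan(t/2)$, which is increasing), one gets $2(\theta - \sin\theta) \le (a - \sin a) + (b - \sin b)$, after which $t \le 2\sin t$ on $[0,\pi/2]$ finishes the job. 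Two minor points to tidy: the strict bound $|\psi_i| < \pi$ can fail for degenerate optimal ETSP tours through collinear points, but $|\psi_i| \le \pi$ always holds and suffices for the non-strict bound claimed; and you should state explicitly that the $S$-curve of two opposite unit arcs has curvature bounded by $1$ and that its forward extent $\sqrt{4m_i - m_i^2} \le 2$ fits inside the gap $d_i - \sin(|\psi_i|/2) - \sin(|\psi_{i+1}|/2) \ge 2$, which is exactly where the $4\rho$ separation hypothesis is consumed.
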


\begin{proof}
The proof uses the alternating algorithm for constructing a suboptimal DTSP solution, a more elaborate explanation of which is given along with illustrations in the work of Savla et al \cite{Savla}. We consider two cases, namely, the case where $n$ is even and the case where $n$ is odd. If $n$ is even, then retain every second edge of the ETSP solution, while replacing every other edge by a shortest Dubins curve of type $CSC$ in such a way that the tour contains no cusp points. By Proposition \ref{pro:1}, each of the $n$ arcs of the tour is no longer than $\pi$. Given a shortest Dubins curve ${\bf p}_1 {\bf p}_2 {\bf p}_3 {\bf p}_4$ of type $CSC$, each of whose arcs are no longer than $\pi$, the corresponding edge ${\bf p}_1 {\bf p}_4$ of the ETSP solution is at least as long as the straight segment ${\bf p}_2 {\bf p}_3$, because its projection onto the line through ${\bf p}_2$ and ${\bf p}_3$ contains the straight segment ${\bf p}_2 {\bf p}_3$. The conclusion for the case where $n$ is even follows.

On the other hand, if $n$ is odd, then retain every second edge of the ETSP solution. The subset of edges from the ETSP solution which we retain determine the directions through all but the 1st point. In order to determine the direction through the 1st point, we construct a shortest Dubins curve of type $CS$ from the $n$th to the 1st point. Once again, the corresponding edge of the ETSP solution is at least as long as the straight segment of the $CS$ curve, whose arc is no longer than $\pi$, according to Corollary \ref{cor:1}. The remaining edges of the tour are shortest Dubins curves of type $CSC$, from which the conclusion follows.
\end{proof}

\section{Experimental results}
\label{sec:7}
In this section, we examine the results of applying the bounding procedure to a large number of approximately random problem instances. To explain the setup, we start by generating 1000 approximately random problem instances, where each problem instance comprises nine nodes in a $12 \times 12$ box. The nodes are separated from one another by at least four times the minimum turning radius, so that each problem instance is an instance of the long path case. Each problem instance is generated via the Metropolis-Hastings algorithm. For more details, see Chapter 5 in the textbook by Liu \cite{Liu}. The relative proximity of the nodes means that each problem instance is far removed from the limiting case where the minimum turning radius is negligible in comparison with the minimum distance between any two points. Accordingly, we expect the solution to the DTSP to differ significantly from the solution to the ETSP. In other words, we expect the bounding algorithm to explore a number of ETSP solutions before terminating. On the other hand, as we scale the problem by increasing both the length of the box and the minimum distance between any two points, we expect the bounding algorithm to explore fewer and fewer ETSP solutions before terminating. Figure \ref{fig:9} (left) shows the average number of ETSP solutions with error bars as the scale of the problem is varied from 4 through 40. For a given sequence of points, denote the length of the incumbent solution and that of the optimal solution by $L$ and $L^*$, respectively. We define the percentage error in the length of the incumbent solution as $100 \frac{L - L^*}{L^*}$. Applying the bound in Theorem \ref{the:4}, we run gradient descent til the percentage error in the length of the incumbent solution is guaranteed to be no more than $10^{-1}$. The bound also informs us about the percentage error in the length of the seed solution. Figure \ref{fig:9} (right) shows the average percentage error in the length of the seed solution with error bars as the scale of the problem is varied from 4 through 40. The seed solution is of very high quality, meaning that it may reliably be used as a heuristic for solving the DTSP, and it may not be necessary from a practical standpoint to use the gradient descent method.

\begin{figure}[h]
    \centering
    \includegraphics[width=\textwidth]{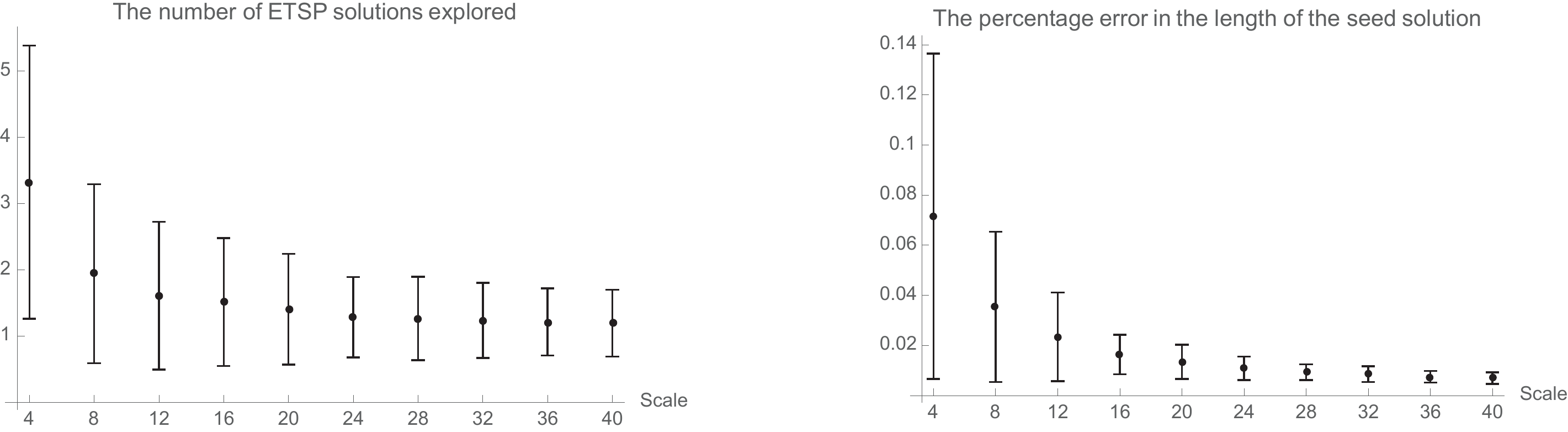}
    \caption{}
    \label{fig:9}
\end{figure}

\section{A generalisation to variable points}
\label{sec:8}
In the final section, we explain how to generalise the gradient descent method to the case where not only the directions of the tour through the points but also the positions of the points themselves may be varied. For example, in forthcoming work, the points are allowed to slide along straight lines. Alternately, in the work of Chen et al., each point is confined to its own disk \cite{Chen}. The generalisation requires the following proposition with adapted notation from the work of Ayala et al. \cite[Proposition 5.2]{Ayala}.

\begin{proposition}
Let ${\bf p}_1 {\bf p}_2 {\bf p}_3 {\bf p}_4$ denote a Dubins curve of type $CSC$, and let ${\bf \hat u}$ denote the normalisation of the vector ${\bf p}_3 {\bf p}_2$. If the point ${\bf p}_4$ is varied with velocity ${\bf v}$, then the first variation of the length of the curve is given by

\begin{equation*}
-{\bf v} \cdot {\bf \hat u}.
\end{equation*}
\end{proposition}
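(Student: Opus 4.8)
The plan is to mirror the bookkeeping in the proof of Theorem \ref{the:1}, but for a translation of the terminal disk rather than a rotation. First I would split the length of the $CSC$ curve ${\bf p}_1 {\bf p}_2 {\bf p}_3 {\bf p}_4$ as the length of the $CS$ sub-curve ${\bf p}_1 {\bf p}_2 {\bf p}_3$ plus the length of the terminal arc ${\bf p}_3 {\bf p}_4$, and differentiate each piece. The key initial observation is that, since the angle $\theta_4$ at ${\bf p}_4$ is held fixed, perturbing ${\bf p}_4$ with velocity ${\bf v}$ rigidly translates the terminal circle; in particular its centre ${\bf c}$ moves with velocity ${\bf v}$, and the vector ${\bf p}_4 - {\bf c}$ stays constant.

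Next I would treat the $CS$ sub-curve. Let ${\bf w} = {\bf p}_3 - {\bf c}$ be the unit radius vector to the point of tangency; it is perpendicular to ${\bf \hat u}$ because the straight segment ${\bf p}_2 {\bf p}_3$ is tangent to the terminal circle at ${\bf p}_3$. Differentiating the relation $\|{\bf w}\| = 1$ gives ${\bf w}' \perp {\bf w}$, so in the plane ${\bf w}' = \mu\, {\bf \hat u}$ for some scalar $\mu$. Hence ${\bf p}_3' = {\bf c}' + {\bf w}' = {\bf v} + \mu\,{\bf \hat u}$, and Lemma \ref{lem:1} yields for the first variation of the length of ${\bf p}_1 {\bf p}_2 {\bf p}_3$ the expression $-{\bf p}_3' \cdot {\bf \hat u} = -{\bf v}\cdot{\bf \hat u} - \mu$.

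Then I would handle the terminal arc. Its length equals the angle it subtends at ${\bf c}$, namely the angle between ${\bf w} = {\bf p}_3 - {\bf c}$ and the constant vector ${\bf p}_4 - {\bf c}$; since only ${\bf w}$ is moving, the first variation of the arc length equals, up to sign, the angular speed of ${\bf w}$, which has magnitude $|\mu|$. Tracking the turning sense of the arc, i.e.\ the side of the forward tangent $-{\bf \hat u}$ on which ${\bf c}$ lies, shows that this first variation is exactly $+\mu$. Summing the two contributions gives $-{\bf v}\cdot{\bf \hat u} - \mu + \mu = -{\bf v}\cdot{\bf \hat u}$, as claimed.

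The main obstacle is the sign reconciliation in the last step: one must verify that the growth of the terminal arc exactly cancels the ``$-\mu$'' produced by the sliding of the tangency point ${\bf p}_3$ along the moving circle, which requires care with the orientation conventions (which side of ${\bf \hat u}$ the radius ${\bf w}$ points to, and in which sense the arc is traversed) — precisely the type of bookkeeping already carried out in the ``$-\omega = -\omega({\bf \hat z}\times{\bf w})\cdot{\bf \hat u}$'' step of Theorem \ref{the:1}. Everything else reduces to a direct application of Lemma \ref{lem:1} together with the elementary fact that the derivative of a unit vector is orthogonal to it.
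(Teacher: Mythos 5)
Your argument is correct, but it takes a genuinely different route from the paper's. You split the curve at ${\bf p}_3$, compute the velocity of the tangency point explicitly as ${\bf p}_3' = {\bf v} + \mu\,{\bf \hat u}$, apply Lemma \ref{lem:1} once, and then show that the sliding contribution $-\mu$ is exactly cancelled by the first variation of the terminal arc. The sign check you flag as the main obstacle does go through: writing ${\bf w} = (\cos\alpha,\sin\alpha)$ so that ${\bf w}' = \alpha'({\bf \hat z}\times{\bf w})$, one finds in both the clockwise and counterclockwise cases that the arc length changes at rate $+\mu$, so the cancellation is genuine. The paper instead splits the curve at the midpoint ${\bf m}$ of the straight segment, asserts that ${\bf m}$ travels with velocity $\tfrac12{\bf v}$ because it lies midway between a fixed circle and one translating with velocity ${\bf v}$, and applies Lemma \ref{lem:1} twice, once to ${\bf p}_1{\bf p}_2{\bf m}$ and once to the reversed second half viewed in the frame moving with velocity ${\bf v}$, obtaining $-\tfrac12{\bf v}\cdot{\bf \hat u}$ from each half. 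The paper's route is shorter and needs no orientation case analysis, but its midpoint claim is only exact up to a component of ${\bf m}'$ along ${\bf \hat u}$ (in the $LSL$/$RSR$ subcase the common unit normal to the two centres rotates), a component which happens to cancel between the two halves; your decomposition makes the analogous cancellation explicit at ${\bf p}_3$, at the cost of the bookkeeping there. Both proofs ultimately reduce to Lemma \ref{lem:1}.
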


\begin{proof}
Referring to Figure \ref{fig:2} (right), denote the midpoint of the straight segment ${\bf p}_2 {\bf p}_3$ by ${\bf m}$. Since the point ${\bf m}$ lies midway between the two circles around which the curve wraps, it must travel with velocity $\frac{1}{2}{\bf v}$. By Lemma \ref{lem:1}, the first variation of the curve ${\bf p}_1 {\bf p}_2 m$ is

\begin{equation*}
-\frac{1}{2}{\bf v} \cdot {\bf \hat u}.
\end{equation*}

On the other hand, subtracting ${\bf v}$ from all elements in Figure \ref{fig:2} (right), we see that the first variation of the length of the curve ${\bf m} {\bf p}_3 {\bf p}_4$ is

\begin{equation*}
-(-\frac{1}{2}{\bf v}) \cdot (- {\bf \hat u}) = -\frac{1}{2}{\bf v} \cdot {\bf \hat u}.
\end{equation*}

Summing these two terms, we obtain for the first variation of the length of the curve ${\bf p}_1 {\bf p}_2 {\bf p}_3 {\bf p}_4$ the expression

\begin{equation*}
-{\bf v} \cdot {\bf \hat u}.
\end{equation*}
\end{proof}

\begin{remark}
As for Lemma \ref{lem:1}, the unit vector ${\bf \hat u}$ may be interpreted as the force exerted upon the point ${\bf p}_4$ by the elastic representing the Dubins curve.
\end{remark}

Recalling that the unit vector which points along the $i$th straight segment and has the same orientation as the tour is denoted by ${\bf \hat u}_i$, the partial derivative of the length of the tour with respect to the position of the $i$th point is given by

\begin{equation*}
\frac{\partial L}{\partial {\bf x}_i} = {\bf \hat u}_{\mbox{mod}(i - 1, n, 1)} - {\bf \hat u}_i.
\end{equation*}

These partial derivatives must be incorporated into the expression for the gradient if the gradient descent method is to be applied to the case where the positions of the points may be varied. If the motions of the points are constrained, e.g., along straight lines, then the partial derivative above must be projected onto the curve which constrains the motion of the $i$th point.

%



%

%


\begin{thebibliography}{}
%
%
\bibitem{Ayala}
Ayala, J., Kirszenblat, D., \& Rubinstein, J. H., Immersed flat ribbon knots, https://arxiv.org/abs/2005.13168 (2020)

\bibitem{Berglund}
Berglund, T., Brodnik, A., Jonsson, H., Staffanson, M., \& Soderkvist, I., Planning Smooth and Obstacle-Avoiding B-Spline Paths for Autonomous Mining Vehicles, IEEE Transactions on Automation Science and Engineering, 7, 167--172 (2010)

\bibitem{Chen}
Chen, Z., Sun, C., Shao, X., \& Zhao, W., A descent method for the Dubins traveling salesman problem with neighborhoods, Frontiers of Information Technology \& Electronic Engineering (2020)

\bibitem{Cohen}
Cohen, I., Epstein, C., \& Shima, T., On the discretized Dubins Traveling Salesman Problem, IISE Transactions, 238--254 (2016)

\bibitem{Dantzig}
Dantzig, G., Fulkerson, R., \& Johnson, S., Solution of a Large-Scale Traveling-Salesman Problem, Journal of the Operations Research Society of America, 2, 393--410 (1954)

\bibitem{Dubins}
Dubins, L. E., On Curves of Minimal Length with a Constraint on Average Curvature, and with Prescribed Initial and Terminal Positions and Tangents, American Journal of Mathematics, 79, 497--516 (1957)

\bibitem{Goaoc}
Goaoc, X., Kim, H., \& Lazard, S., Bounded-Curvature Shortest Paths through a Sequence of Points, [Research Report] RR-7465, INRIA, 53, inria-00539957 (2010)

\bibitem{Kaya}
Kaya, Y., Markov–Dubins interpolating curves, Computational Optimization and Applications, 73, 647--677 (2019)

\bibitem{Kirszenblat1}
Kirszenblat, D., Dubins Networks, [Unpublished honours thesis], The University of Melbourne (2010)

\bibitem{Kirszenblat2}
Kirszenblat, D., Sirinanda, K. G., Brazil, M., Grossman, P. A., Rubinstein, J. H., \& Thomas, D. A., Minimal curvature-constrained networks, Journal of Global Optimization, 72, 71--87 (2018)

\bibitem{Lee}
Lee, J., Cheong, O., Kwon, W., Shin, S. Y., \& Chwa, K., Approximation of Curvature-Constrained Shortest Paths through a Sequence of Points, Lecture Notes in Computer Science, 1879. Springer, Berlin (2000)

\bibitem{Liu}
Liu, J. S., Monte Carlo Strategies in Scientific Computing, 105--128. Springer, New York (2004)

\bibitem{Ma}
Ma, X., \& Castanon, D. A., Receding Horizon Planning for Dubins Traveling Salesman Problems, Proceedings of the 45th IEEE Conference on Decision and Control, 5453--5458 (2006)

\bibitem{Macharet}
Macharet, D. G., \& Campos, M. F. M., An Orientation Assignment Heuristic to the Dubins Traveling Salesman Problem, Lecture Notes in Computer Science, 8864. Springer, Cham (2014)

\bibitem{Manyam}
Manyam, S. G., Rathinam, S., Casbeer D., \& Garcia, E., Tightly Bounding the Shortest Dubins Paths Through a Sequence of Points, Journal of Intelligent \& Robotic Systems, 88, 495--511 (2017)

\bibitem{Ny}
Ny, J., Feron, E., \& Frazzoli, E., On the Dubins Traveling Salesman Problem, IEEE Transactions on Automatic Control, 57, 265--270 (2012)

\bibitem{Otto}
Otto, A., Agatz, N., Campbell, J., Golden, B., \& Pesch, E., Optimization approaches for civil applications of unmanned aerial vehicles (UAVs) or aerial drones: A survey, Networks, 72, 411--458 (2018)

\bibitem{Rathinam}
Rathinam, S., \& Khargonekar, P., An Approximation Algorithm for a Shortest Dubins Path Problem https://arxiv.org/abs/1604.05064 (2016)

\bibitem{Savla}
Savla, K., Frazzoli, E., \& Bullo, F., Traveling Salesperson Problems for the Dubins vehicle, IEEE Transactions on Automatic Control, 53, 1378--1391 (2008)

\bibitem{Tang}
Tang, Z., \& Ozguner, U., Motion planning for multitarget surveillance with mobile sensor agents, IEEE Transactions on Robotics, 21, 898--908 (2005)
\end{thebibliography}
\end{document}